\newcommand{\eee}[1]{\begin{equation}#1\end{equation}}
\newcommand{\aaa}[1]{\begin{alignat}{2}#1\end{alignat}}
\newcommand{\ddd}[1]{\begin{equation}\begin{aligned}#1\end{aligned}\end{equation}}
\newcommand{\nn}{\nonumber}
\newcommand{\p}{\partial}
\newcommand{\no}[1]{\left\| #1 \right\|}
\newcommand\wwhat[1]{%
\savestack{\tmpbox}{\stretchto{%
  \scaleto{%
    \scalerel*[\widthof{\ensuremath{#1}}]{\kern-.6pt\bigwedge\kern-.6pt}%
    {\rule[-\textheight/2]{1ex}{\textheight}}
  }{\textheight}%
}{0.5ex}}%
\stackon[1pt]{#1}{\tmpbox}%
}
\declaretheoremstyle[headfont=\normalfont\bfseries, bodyfont=\itshape, spaceabove=7pt,spacebelow=7pt]{theorem} 
\theoremstyle{theorem} 
\newtheorem{theorem}{Theorem}[] 
\newtheorem{remark}{Remark}[section]
\newtheorem{lemma}{Lemma}[section]
\theoremstyle{definition}
\numberwithin{equation}{section}
\numberwithin{figure}{section}
\tikzset{->-/.style={decoration={
  markings,
  mark=at position #1 with {\arrow{>}}},postaction={decorate}}}
  \tikzset{middlearrow/.style={
        decoration={markings,
            mark= at position 0.55 with {\arrow{#1}} ,
        },
        postaction={decorate}
    }
}
  \renewcommand\subsection{\@startsection{subsection}{2}%
  \z@{-1\linespacing\@plus-0.7\linespacing}{0.7\linespacing}%
  {\bfseries}}
\let\OLDthebibliography\thebibliography
\renewcommand\thebibliography[1]{
  \OLDthebibliography{#1}
  \setlength{\parskip}{0pt}
  \setlength{\itemsep}{0pt plus 0.3ex}
}
   \def\MR#1{}
\begin{document}

\title{A new approach for the analysis of evolution partial differential equations on a finite interval}

\author{T\"urker \"Ozsar{\i}$^a$, Dionyssios Mantzavinos$^b$, Konstantinos Kalimeris$^c$}
	
\address{
\normalfont $^a$Department of Mathematics, Bilkent University, 06800 Ankara, Turkey
\\
\normalfont $^b$Department of Mathematics, University of Kansas, Lawrence, KS 66045, USA
\\
\normalfont $^c$Mathematics Research Center, Academy of Athens, 115 27 Athens, Greece
} 
\email{turker.ozsari@bilkent.edu.tr, mantzavinos@ku.edu \textnormal{(corresponding author)}, kkalimeris@academyofathens.gr}

\thanks{\textit{Acknowledgements.} DM gratefully acknowledges support from the U.S. National Science Foundation (NSF-DMS 2206270 and NSF-DMS 2509146) and the Simons Foundation (SFI-MPS-TSM-00013970). Furthermore, DM is thankful to the Department of Mathematics of Bilkent University, Ankara, Turkey, for their warm hospitality during March of 2025, when part of this work was undertaken. 
TÖ's research is supported by BAGEP 2020 Young Scientist Award.
KK acknowledges support by the Sectoral Development Program (SDP 5223471) of the Greek Ministry of Education, Religious Affairs and Sports, through the National Development Program (NDP) 2021-25, grant no 200/1029. 
The authors are grateful to the reviewers of the manuscript for their constructive remarks that led to its improvement. 
}
\subjclass[2020]{35G16, 35G31, 35Q53, 35K05}
\keywords{Nonhomogeneous initial-boundary value problems on a finite interval, reduction to half-line, well-posedness in Sobolev spaces, unified transform, Fokas method, Korteweg-de Vries (KdV) equation, heat equation}
\date{November 4, 2025. \textit{Revised}: May 13, 2026}

\begin{abstract}
We show that, for certain evolution partial differential equations, the solution on a finite interval $(0,\ell)$ can be reconstructed as a superposition of restrictions to $(0,\ell)$ of solutions to two associated partial differential equations posed on the half-lines $(0,\infty)$ and $(-\infty,\ell)$. Determining the appropriate data for these half-line problems amounts to solving an inverse problem, which we formulate via the unified transform of Fokas (also known as the Fokas method) and address via a fixed point argument in $L^2$-based Sobolev spaces, including fractional ones through interpolation techniques. We illustrate our approach through two canonical examples, the heat equation and the Korteweg-de Vries (KdV) equation, and provide numerical simulations for the former example. We further demonstrate that the new approach extends to more general evolution partial differential equations, including those with time-dependent coefficients. A key outcome of this work is that spatial and temporal regularity estimates for problems on a finite interval can be directly derived from the corresponding estimates on the half-line. These results can, in turn, be used to establish local well-posedness for related nonlinear problems, as the essential ingredients are the linear estimates within nonlinear frameworks.
\end{abstract}

\maketitle
\markboth
{T\"urker \"Ozsar{\i}, Dionyssios Mantzavinos, Konstantinos Kalimeris}
{A new approach for the analysis of evolution partial differential equations on a finite interval}


\section{Introduction}

\textit{Initial-boundary value problems} for evolution partial differential equations arise naturally in a wide range of applications associated, in particular, with various areas of physics and engineering, including water waves and control theory. Such problems model phenomena taking place over a spatial domain that involves a boundary, like the half-line $(0, \infty)$ or the finite interval $(0, \ell)$ in one spatial dimension. In contrast to the more standard \textit{initial value (Cauchy) problems}, which are formulated either on the entire space  or on boundaryless manifolds and only require the prescription of initial conditions, initial-boundary value problems must also be supplemented with appropriate \textit{boundary conditions}. 

Furthermore, while in some cases zero (homogeneous) boundary conditions can be used to model certain phenomena, \textit{nonzero} (nonhomogeneous) boundary conditions are the ones present in the vast majority of applications. Hence, the study of \textit{nonhomogeneous} initial-boundary value problems is of critical importance. At the same time, this task can become quite intricate, even at the linear level. In particular, it is worth noting that, while the Cauchy problem for any linear evolution equation can be easily solved by taking a Fourier transform with respect to the spatial variable, such a classical spatial transform is not available for nonhomogeneous initial-boundary value problems that involve linear evolution equations of spatial order higher than two \cite{f2008}. This fact directly affects the analysis of \textit{nonlinear} equations.

In the case of nonlinear dispersive models like the Korteweg-de Vries (KdV) and the nonlinear Schr\"odinger (NLS) equation, the proof of Hadamard well-posedness (existence, uniqueness, and continuous dependence of the solution on the data) for the Cauchy problem crucially relies on a Picard iteration scheme and linear estimates established through the Fourier transform solution of the linearized problem. Hence, the absence of the Fourier transform from the initial-boundary value problem setting  poses an immediate challenge right at the beginning of the analysis.
As a result, new techniques had to be developed for the study of nonlinear dispersive (and, more generally, evolution) equations specifically in domains with a boundary. 

The three main methods available in the literature for proving the well-posedness of initial-boundary value problems are:
\begin{enumerate}[label=(\arabic*), leftmargin=6mm, topsep=1mm, itemsep=1mm]
\item The \textit{temporal Laplace transform method} of Bona, Sun and Zhang, which was introduced in \cite{bsz2002} for the KdV equation on the half-line and has since been used in several other works, e.g. \cite{bsz2006,bsz2008,kai2013,ozs2015,bo2016,et2016}; 
\item The \textit{boundary forcing operator method} of Colliander and Kenig, developed for the generalized KdV equation on the half-line \cite{ck2002} and later employed by Holmer for the KdV and NLS equations on the half-line \cite{h2005,h2006} (and, more recently, in  \cite{c2017,cc2020});
\item The \textit{unified transform method} introduced in~\cite{fhm2017,fhm2016} for the NLS and KdV equations on the half-line, which takes advantage of the unified transform (also known as the Fokas method) \cite{f1997,f2008} as the analogue of the Fourier transform in domains with a boundary and has been consistently developed through several works in recent years, e.g. see~\cite{oy2019,hm2020,hm2022,ko2022,hy2022-jde,mo2025}. 
\end{enumerate}

\vskip 2mm

\textit{The purpose of the present work is to demonstrate that the proof of well-posedness of a wide class of initial-boundary value problems on the finite interval $(0, \ell)$ can be reduced to that of suitable initial-boundary value problems on the positive and negative half-lines $(0, \infty)$ and $(-\infty, \ell)$.}  

\vskip 3mm

The new approach leading to this reduction relies on the powerful linear solution formulae that form the core of the unified transform method outlined above and, more specifically, on the exponentially decaying integrands that involve the boundary data in these formulae. 

As the difference in the analysis of the half-line and finite interval problems lies only in the derivation of the \textit{linear estimates} needed for the contraction mapping argument of the Picard iteration, the ``finite interval to half-line'' reduction introduced through the present work is \textit{only} required at the level of the \textit{forced linear counterpart} of the initial-boundary value problem under consideration. 
We introduce our approach via two fundamental examples: (i)~the forced heat equation and (ii)~the forced linearized KdV equation, both formulated on the finite interval $(0, \ell)$ with nonzero Dirichlet and Neumann boundary data, as appropriate. 

We emphasize that nonlinear analogues of each of these two linear models have already been studied \textit{directly} on the finite interval. For example, the local Hadamard well-posedness of a reaction-diffusion equation whose linear part corresponds to the heat equation was established in \cite{hmy2019-rd}. Moreover, local and global well-posedness results for the KdV equation on a finite interval were obtained in \cite{bsz2003,h2006,fam2007,hmy2019-kdv} via the three different methods outlined above, including the treatment of lower regularity assumptions via Bourgain spaces. In particular, \cite{h2006} used operators associated with half-line
problems, whereas \cite{fam2007} constructed the solution of the linear interval problem as a sum of the solution to a negative half-line problem and another linear interval problem.

In the light of the new approach introduced here, the results of these works, along with  other works in the literature on the direct analysis of nonlinear evolution equations on a finite interval such as~\cite{lzz2020,mmo2026},  can instead be deduced from their half-line counterparts (e.g. see \cite{bsz2002,ck2002,fam2004,h2006,fhm2016,lzz2017,f2024,amo2024}) upon solving a certain unidimensional integral equation originating from the exact solution formulae of the associated linear half-line problems (e.g. see \eqref{a-int-eq} and \eqref{a-eq-kdv}). 
For nonlinear equations whose linear part corresponds to the heat equation, the essence of our approach lies in the following key result:

\begin{theorem}[Finite interval to half-line I]\label{heat-t}
Let $m \geq 0$, $\ell>0$ and  $0 < T \leq \dfrac{\sqrt \pi e^{\frac 32}}{2 \cdot 3^{\frac 54}} \, \ell^2$. 
Then, for Dirichlet boundary data $g \in H^m(0, T)$ such that $g^{(n)}(0) = 0$ for all integers $0\leq n < m-\frac 12$, there exist functions $a, b \in H^m(0, T)$ such that the solution $q(x, t)$ to the heat equation finite interval problem
\begin{equation}\label{q-fi-ibvp-i}
\begin{aligned}
&q_t - q_{xx} = 0, \quad x \in (0, \ell), \ t \in (0, T),
\\
&q(x, 0) = 0, \quad x \in (0, \ell),
\\
&q(0, t) = g(t), \quad q(\ell, t) = 0, \quad t \in (0, T),
\end{aligned}
\end{equation}
can be expressed as the sum
\begin{equation}\label{fi-hl-dec-i}
q(x, t) = v(x, t) \big|_{x \in (0, \ell)} + w(x, t) \big|_{x \in (0, \ell)}
\end{equation}
of the restrictions on $(0, \ell)$ of the solutions to the heat equation half-line problems
\begin{equation}\label{vw-hl-ibvp-i}
\begin{aligned}
&v_t - v_{xx} = 0, \quad x \in (0, \infty), \ t \in (0, T),
\\
&v(x, 0) = 0, \quad x \in (0, \infty),
\\
&v(0, t) = a(t), \quad t \in (0, T),
\end{aligned}
\hspace*{1cm}
\begin{aligned}
&w_t - w_{xx} = 0, \quad x \in (-\infty, \ell), \ t \in (0, T),
\\
&w(x, 0) = 0, \quad x \in (-\infty, \ell),
\\
&w(\ell, t) = b(t), \quad t \in (0, T).
\end{aligned}
\end{equation}
\end{theorem}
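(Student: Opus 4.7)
The plan is to seek $q$ in the form~\eqref{fi-hl-dec-i}, treating the boundary functions $a, b \in H^{m}(0,T)$ as unknowns to be determined. Since each half-line piece $v$ and $w$ already satisfies the heat equation with zero initial datum on its own domain, the restricted sum $q = v|_{(0,\ell)} + w|_{(0,\ell)}$ automatically verifies $q_t - q_{xx} = 0$ on $(0,\ell)$ together with $q(\cdot, 0) = 0$. The only remaining requirements are therefore the finite-interval boundary conditions, which, after substitution, become
\begin{equation*}
a(t) + w(0, t) = g(t), \qquad v(\ell, t) + b(t) = 0, \quad t \in (0, T).
\end{equation*}
Thus the theorem is reduced to solving this coupled system for $(a, b)$ in $H^{m}(0, T) \times H^{m}(0, T)$.

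\textbf{A boundary-to-boundary Volterra operator.}
Using either the classical Poisson representation for the heat half-line problem or, equivalently, the unified-transform solution formula that lies at the heart of this paper, both $v(\ell, \cdot)$ and $w(0, \cdot)$ can be written as time convolutions of the corresponding Dirichlet data against the common smoothing kernel
\begin{equation*}
k(\tau) = \frac{\ell \, e^{-\ell^{2}/(4\tau)}}{2\sqrt{\pi}\, \tau^{3/2}}, \qquad \tau > 0,
\end{equation*}
whose rapid decay as $\tau \to 0^{+}$ reflects the separation between the point where the datum is prescribed and the point where it is evaluated. Writing $K$ for the resulting Volterra operator on $(0, T)$, the coupled system above reads $a + Kb = g$ and $Ka + b = 0$, which is algebraically equivalent to
\begin{equation*}
(I - K^{2}) a = g, \qquad b = -K a.
\end{equation*}
Once $I - K^{2}$ is shown to be invertible on $H^{m}(0, T)$, the unique pair $(a, b)$ is obtained via a Neumann series.

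\textbf{Operator estimates and the main obstacle.}
The central analytic task is the contraction bound $\|K\|_{H^{m}(0,T) \to H^{m}(0,T)} < 1$. Because of the factor $e^{-\ell^{2}/(4\tau)}$, the kernel $k$ and all its derivatives vanish as $\tau \to 0^{+}$, so $K$ smooths arbitrarily and Young's convolution inequality yields
\begin{equation*}
\|K f\|_{L^{2}(0, T)} \le \|k\|_{L^{1}(0, T)} \, \|f\|_{L^{2}(0, T)} = \mathrm{erfc}\Bigl(\tfrac{\ell}{2\sqrt{T}}\Bigr) \|f\|_{L^{2}(0, T)}.
\end{equation*}
The compatibility assumptions $g^{(n)}(0) = 0$ for $n < m - \tfrac{1}{2}$ are exactly what is needed to commute $K$ with time differentiation without producing boundary terms at $t = 0$, so the same bound propagates to every integer Sobolev exponent; fractional values of $m$ are then handled by interpolation between consecutive integer cases. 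The main technical obstacle, which I expect to be the hardest part of the proof, is to sharpen this estimate enough to recover the explicit threshold on $T$ displayed in the statement; the natural way forward is to combine the above $L^{1}$ bound with the sharp pointwise maximum $\|k\|_{L^{\infty}} = \tfrac{3\sqrt{6}\, e^{-3/2}}{\sqrt{\pi} \, \ell^{2}}$, attained at $\tau = \ell^{2}/6$, and to exploit the iterated Volterra structure of $K^{2}$ to quantify contractivity precisely. Once this is achieved, $(I - K^{2})^{-1}$ exists on $H^{m}(0, T)$, the resulting $a$ and $b$ automatically inherit the required regularity and vanishing-at-zero properties, and uniqueness for~\eqref{q-fi-ibvp-i} yields the decomposition~\eqref{fi-hl-dec-i}.
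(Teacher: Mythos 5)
Your proposal is correct and follows the same overall architecture as the paper: write $q=v|_{(0,\ell)}+w|_{(0,\ell)}$, reduce to the coupled boundary system $a+Kb=g$, $Ka+b=0$ (the paper's \eqref{ab-cond}), collapse it to a single equation $(I-K^2)a=g$ for $a$ (the paper's \eqref{a-int-eq}, whose double $\Lambda_\ell$-integral is precisely your $K^2$ written out, since $\frac{\ell}{2\sqrt\pi}\Lambda_\ell$ is exactly your kernel $k$), solve it by a fixed-point/Neumann argument in $L^2(0,T)$, propagate to integer $H^m$ by commuting differentiation with $K$ using $k^{(j)}(0^+)=0$ and the compatibility conditions $g^{(n)}(0)=0$ (which force $a^{(n)}(0)=0$, exactly as in the paper's \eqref{an=0} and \eqref{aj-int-eq}), and interpolate for fractional $m$. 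The one genuine difference is the key quantitative step, and here you have actually done better than you realize: your Young's-inequality bound $\|K\|_{L^2\to L^2}\le\|k\|_{L^1(0,T)}=\mathrm{erfc}\bigl(\ell/(2\sqrt T)\bigr)<1$ is valid for \emph{every} $T>0$, so $\|K^2\|<1$ and $I-K^2$ is invertible by Neumann series with no restriction on $T$ whatsoever. The "main technical obstacle" you flag --- sharpening the estimate to recover the threshold $T\le\frac{\sqrt\pi e^{3/2}}{2\cdot 3^{5/4}}\ell^2$ --- is illusory: that threshold is a \emph{hypothesis} of the theorem, not a conclusion, and it arises in the paper only because the authors use the cruder route of bounding $\Lambda_\ell$ by its $L^\infty$ norm $(6/(e\ell^2))^{3/2}$ and applying Cauchy--Schwarz, which yields the contraction constant $\frac{18\sqrt3\,T^2}{\pi e^3\ell^4}$ and hence the smallness condition \eqref{T-L2}; the paper then removes this restriction a posteriori by the gluing argument of Section \ref{ug-ss}. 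Your sharper $L^1$ bound renders both the threshold and the continuation argument unnecessary, which is a genuine (if modest) improvement. The only loose ends to tighten are routine: justify the convolution representation of the boundary traces $v(\ell,\cdot)$ and $w(0,\cdot)$ for $L^2$ data (the paper does this via the unified-transform formulae \eqref{heat-utm-t}--\eqref{w-utm-t} and Lemmas \ref{j-l}--\ref{j2-l}), and invoke uniqueness for the finite-interval problem to conclude that the superposition equals $q$.
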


Theorem \ref{heat-t} is proved in Section \ref{red-s}. The corresponding result in the case of nonlinear equations whose linear part is given by the linearized KdV equation is established in Section \ref{kdv-s} and reads as follows:
\begin{theorem}[Finite interval to half-line II]\label{kdv-t}
Let $m \geq 0$, $\ell>0$ and  $T=T(\ell)>0$ satisfy the inequality \eqref{T-L2-kdv}. Then, for Dirichlet boundary data $g \in H^m(0, T)$ such that $g^{(n)}(0) = 0$ for all integers $0\leq n < m-\frac 12$, there exist functions $a, b \in H^m(0, T)$ and $c\in H^{m-\frac 13}(0, T)$ such that the solution $q(x, t)$ to the linearized KdV equation finite interval problem
\begin{equation}\label{fi-kdv-red-i}
\begin{aligned}
&q_t + q_x + q_{xxx} = 0, \quad x \in (0, \ell), \ t \in (0, T),
\\
&q(x, 0) = 0, \quad x \in (0, \ell),
\\
&q(0, t) = g(t), \quad q(\ell, t) = 0, \quad q_x(\ell, t) = 0, \quad t \in (0, T),
\end{aligned}
\end{equation}
can be expressed as the sum
\begin{equation}\label{sup-kdv-i}
q(x, t) = v(x, t) \big|_{x \in (0, \ell)} + w(x, t) \big|_{x \in (0, \ell)}
\end{equation}
of the restrictions on $(0, \ell)$ of the solutions to the linearized KdV equation half-line problems
\begin{equation}\label{vw-hllr-i}
\begin{aligned}
&v_t + v_x + v_{xxx} = 0, \quad x \in (0, \infty), \ t \in (0, T),
\\
&v(x, 0) = 0, \quad x \in (0, \infty),
\\
&v(0, t) = a(t), \quad t \in (0, T),
\end{aligned}
\hspace*{1cm}
\begin{aligned}
&w_t + w_x + w_{xxx} = 0, \quad x \in (-\infty, \ell), \ t \in (0, T),
\\
&w(x, 0) = 0, \quad x \in (-\infty, \ell),
\\
&w(\ell, t) = b(t), \quad w_x(\ell, t) = c(t), \quad t \in (0, T).
\end{aligned}
\end{equation}
\end{theorem}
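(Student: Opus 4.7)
The plan is to follow the strategy of Theorem~\ref{heat-t}: use the unified transform representations of the two half-line solutions $v$ and $w$ to rewrite the three finite-interval boundary conditions of~\eqref{fi-kdv-red-i} as a closed linear system for the three unknowns $(a, b, c)$, and then solve that system in the appropriate Sobolev spaces by a Banach fixed point argument whose contractivity is ensured by the smallness of $T$ relative to $\ell$ encoded in~\eqref{T-L2-kdv}.

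First, I would write down the Fokas method representations of $v$ and $w$ as contour integrals in $k \in \mathbb{C}$ whose integrand is essentially $e^{ikx - \omega(k)t}$, with $\omega(k)$ the dispersion relation of $q_t + q_x + q_{xxx} = 0$, multiplied by the time-transform of the respective boundary data $a$, and $b$ and $c$. The contour for $v$ is deformed into the upper $k$-half-plane so that $e^{ikx}$ decays as $x \to +\infty$, while the contour for $w$ sits in the lower $k$-half-plane so that $e^{ik(x - \ell)}$ decays as $x \to -\infty$; along each contour $\operatorname{Re}(\omega(k)) \geq 0$ for $t \geq 0$, which makes both representations absolutely convergent.

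Imposing the finite-interval boundary conditions of~\eqref{fi-kdv-red-i} on $q = (v + w)\big|_{(0,\ell)}$ and using the built-in identities $v(0,t) = a(t)$, $w(\ell, t) = b(t)$, $w_x(\ell, t) = c(t)$ yields the closed linear system
\begin{align*}
a(t) &= g(t) - w(0, t;\, b, c),\\
b(t) &= - v(\ell, t;\, a),\\
c(t) &= - v_x(\ell, t;\, a),
\end{align*}
whose right-hand side involves the traces of $w$ at $x = 0$ and of $v$ and $v_x$ at $x = \ell$, each being an explicit linear integral operator on $(a, b, c)$ obtained by evaluating the Fokas representation at the opposite endpoint.

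The main step is to show that the affine map $\Phi:(a,b,c)\mapsto \bigl(g - w(0,\cdot),\, -v(\ell, \cdot),\, -v_x(\ell, \cdot)\bigr)$ is a contraction on the natural product space
\[
\mathcal{X}_m := \bigl\{(a,b,c) \in H^m(0,T)^2 \times H^{m - \frac 13}(0, T) : a^{(n)}(0) = b^{(n)}(0) = 0 \text{ for } n < m - \tfrac 12, \ c^{(n)}(0) = 0 \text{ for } n < m - \tfrac 56\bigr\},
\]
where the $\tfrac 13$-shift in the regularity of $c$ reflects the standard Neumann trace gap for third-order dispersive problems in which $x \sim t^{1/3}$. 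Contractivity arises from the factor $e^{ik\ell}$ produced whenever $v$ is evaluated at $x = \ell$ or $w$ at $x = 0$: along the deformed Fokas contours this factor carries strictly negative real exponent and yields a small prefactor whose size is controlled by~\eqref{T-L2-kdv}. The main obstacle will be the derivation of sharp $H^m$- and $H^{m-\frac 13}$-bounds for these off-diagonal traces, as it requires careful oscillatory-integral estimates: after parametrizing each contour as a curve along which $\operatorname{Re}(\omega(k)) \geq 0$, Plancherel converts the contour integrals into $L^2$-type norms in time, which must then be bounded through the $e^{ik\ell}$-induced decay together with weighted estimates accounting for $m$ (respectively $m - \tfrac 13$) temporal derivatives. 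Once the contraction is established for integer $m$, fractional values follow by interpolation as in Section~\ref{red-s}, and the Banach fixed point theorem delivers the unique triple $(a, b, c) \in \mathcal{X}_m$ for which the decomposition~\eqref{sup-kdv-i} holds.
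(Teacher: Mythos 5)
Your overall architecture (Fokas representations, closed linear system from the boundary conditions, contraction in Sobolev spaces for small $T$) matches the paper's, but the mechanism you give for contractivity is wrong at the decisive point, and this is precisely the difficulty that distinguishes the KdV case from the heat case. You claim that whenever $w$ is evaluated at $x=0$ the factor $e^{ik\ell}$ ``carries strictly negative real exponent'' along the deformed contour and that this produces the small prefactor. For the negative-sign linear KdV problem satisfied by $\breve w(x,t)=w(\ell-x,t)$, the Fokas contour $\mathcal C_-$ in \eqref{w-kdv-sol} necessarily contains the \emph{unbounded real segments} $\{|k|>1/\sqrt 3\}$, on which $|e^{ik\ell}|=1$ and the spatial exponential is purely oscillatory; there is no exponential decay to exploit there. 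The paper makes exactly this point in the remark following \eqref{a-eq-kdv}: this is why the heat-equation argument (which does rest on decay of $e^{ik\ell}$ along $\p\mathcal D$) does not transfer. On those real segments the paper instead changes variables to $\tau=-(k^3-k)$, applies Plancherel in $t$ and in $z$, and extracts smallness from explicit powers of $T$ multiplying the uniform kernel bounds \eqref{j1-bound-kdv}--\eqref{j2-bound-kdv}; the resulting condition \eqref{T-L2-kdv} is a smallness condition on $T$ for fixed $\ell$, not a consequence of $e^{ik\ell}$ being small (for fixed $\ell$ the curved-contour contributions only yield finite constants $c_1,c_2$ depending on $\ell$). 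As written, your contraction estimate would fail on the real portions of $\mathcal C_-$, which carry the terms \eqref{ag1}, \eqref{ag2}, \eqref{ag4}, \eqref{ag5}.

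Two further, less critical divergences. First, the paper does not contract on the triple $(a,b,c)$: it substitutes $b=-v(\ell,\cdot)$ and $c=-v_x(\ell,\cdot)$ into the equation for $a$ and runs the fixed point for $a$ alone in $H^m(0,T)$ (equation \eqref{a-eq-kdv}), defining $b$ and $c$ afterwards; this sidesteps having to prove a $\tfrac 13$-derivative gain for the off-diagonal map $c\mapsto w(0,\cdot)$ on a product space, a gain that is not obvious on the non-decaying real segments of $\mathcal C_-$. Second, for the $H^m$ theory the paper does not use weighted oscillatory-integral estimates; it shows via the vanishing traces $a^{(n)}(0)=0$ (guaranteed by the hypothesis $g^{(n)}(0)=0$ for $n<m-\tfrac 12$) that $\Phi_g[a]^{(n)}=\Phi_{g^{(n)}}[a^{(n)}]$, reducing the $H^m$ bound to the $L^2$ bound applied to derivatives, with fractional $m$ recovered by interpolation. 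Your proposal names the right hypotheses but does not supply this commutation argument, which is what actually makes the higher-order estimates go through.
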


The impact of Theorems \ref{heat-t} and \ref{kdv-t} can be appreciated via the straightforward application of their results on the decompositions detailed in the beginning of Sections \ref{red-s} and \ref{kdv-s}. Indeed, in view of the half-line results of~\cite{hmy2019-rd,bsz2002}, \textit{Theorems \ref{heat-t} and \ref{kdv-t} directly imply the well-posedness of the nonlinear reaction-diffusion and KdV equations studied on the finite interval $(0, \ell)$ in \cite{hmy2019-rd,bsz2003}} --- see Theorems~\ref{lwp-rd-t} and \ref{lwp-kdv-t} respectively --- without the need for studying those finite interval problems on their own right.
Finally, as one may expect,  the constraints on $T$ in Theorems \ref{heat-t} and \ref{kdv-t} can be removed via the iteration argument outlined in Section \ref{ug-ss}, thus extending the validity of our results to arbitrary $T>0$.
\\[2mm]
\textbf{Structure.}
In Section \ref{red-s}, we establish Theorem \ref{heat-t} by exploiting the dissipative nature of the heat equation. In Section \ref{kdv-s}, we prove Theorem \ref{kdv-t} by adapting our approach to the dispersive nature of the linearized KdV equation. Finally, in Section \ref{c-s}, we remark on the uniqueness of solution and global solvability of the integral equation \eqref{a-int-eq} (which provides the basis for the analysis of Section \ref{red-s}), we numerically illustrate the result of Theorem \ref{heat-t}, and we outline the extension of the method of Section \ref{red-s} to the framework of the heat equation with a time-dependent diffusion coefficient.

\section{The Heat Equation}
\label{red-s}

Consider the forced heat equation on a finite interval with nonzero Dirichlet boundary conditions, namely
\begin{equation}\label{u-fi-ibvp}
\begin{aligned}
&u_t - u_{xx} = f(x, t), \quad x \in (0, \ell), \ t \in (0, T),
\\
&u(x, 0) = u_0(x), \quad x \in (0, \ell),
\\
&u(0, t) = g_0(t), \quad u(\ell, t) = h_0(t), \quad t \in (0, T),
\end{aligned}
\end{equation}
where $u=u(x, t)$ and the precise function spaces for the initial datum $u_0$, the boundary data $g_0, h_0$ and the forcing $f$ will be discussed in due course. 

Furthermore, letting $U_0$ and $F$ respectively denote suitable extensions of $u_0$ and $f$ from the finite interval $(0, \ell)$ to the negative half-line $(-\infty, \ell)$, consider the negative half-line problem
\begin{equation}\label{U-hl-ibvp}
\begin{aligned}
&U_t - U_{xx} = F(x, t), \quad x \in (-\infty, \ell), \ t \in (0, T),
\\
&U(x, 0) = U_0(x), \quad x \in (-\infty, \ell),
\\
&U(\ell, t) = h_0(t), \quad t \in (0, T).
\end{aligned}
\end{equation}
The observation that the function  $\breve U(x, t) := U(\ell-x, t)$ satisfies the positive half-line problem
\begin{equation}\label{Utilde-hl-ibvp}
\begin{aligned}
&\breve U_t - \breve U_{xx} = \breve F(x, t) := F(\ell-x, t), \quad x \in (0, \infty), \ t \in (0, T),
\\
&\breve U(x, 0) = \breve U_0(x) := U_0(\ell-x), \quad x \in (0, \infty),
\\
&\breve U(0, t) = h_0(t), \quad t \in (0, T),
\end{aligned}
\end{equation}
which has been estimated in \cite{hmy2019-rd}, allows us to readily extract estimates for the negative half-line problem~\eqref{U-hl-ibvp}. 

Therefore, in order to estimate the finite interval problem~\eqref{u-fi-ibvp}, it suffices to estimate the following \textit{reduced}  interval problem satisfied by the difference $q := u-U|_{x\in(0, \ell)}$:
\begin{equation}\label{q-fi-ibvp}
\begin{aligned}
&q_t - q_{xx} = 0, \quad x \in (0, \ell), \ t \in (0, T),
\\
&q(x, 0) = 0, \quad x \in (0, \ell),
\\
&q(0, t) = g(t) := g_0(t) - U(0, t), \quad q(\ell, t) = 0, \quad t \in (0, T),
\end{aligned}
\end{equation}
which is precisely problem \eqref{q-fi-ibvp-i} with the above choice of $g$. 

The solution of the reduced interval problem \eqref{q-fi-ibvp} can be formally expressed as the sum \eqref{fi-hl-dec-i} of the restrictions on $(0, \ell)$ of the solutions $v(x, t)$ and $w(x, t)$ to the two half-line problems in \eqref{vw-hl-ibvp-i} provided that the relevant boundary data $a(t)$ and $b(t)$ satisfy the conditions
\begin{equation}\label{ab-cond}
\begin{aligned}
a(t) &= g(t) - w(0, t),
\\
b(t) &= -v(\ell, t).
\end{aligned}
\end{equation}
Importantly, the function $\breve w(x, t) := w(\ell-x, t)$ satisfies the positive half-line problem
\begin{equation}\label{w-hl-ibvp}
\begin{aligned}
&\breve w_t - \breve w_{xx} = 0, \quad x \in (0, \infty), \ t \in (0, T),
\\
&\breve w(x, 0) = 0, \quad x \in (0, \infty),
\\
&\breve w(0, t) = b(t), \quad t \in (0, T),
\end{aligned}
\end{equation}
so the two half-line problems in \eqref{vw-hl-ibvp-i} are of the same type. 

\begin{remark}
An alternative but equivalent way to the above decomposition is to subtract from the original interval problem \eqref{u-fi-ibvp} the problem on the positive half-line $(0, \infty)$ with initial datum $U_0$, forcing $F$ and Dirichlet datum $g_0$ at $x=0$. The conditions resulting from this reduction are entirely analogous to \eqref{ab-cond}.
\end{remark}

In view of the decomposition \eqref{fi-hl-dec-i}, the analysis of the reduced interval problem \eqref{q-fi-ibvp} --- and, in turn (as explained earlier), of the full interval problem \eqref{u-fi-ibvp} --- can be extracted from the analysis of the positive half-line problem in~\eqref{vw-hl-ibvp-i}. In other words, 
\textit{the study of the heat equation and its nonlinear counterparts on the finite interval can be carried through the corresponding analysis on the half-line.}
However, in order for the formal decomposition \eqref{fi-hl-dec-i} to be made rigorous, one must prove that there indeed exist functions $a, b$ satisfying the required conditions \eqref{ab-cond}. 

In this regard, we note that in \cite{hmy2019-rd} it was shown that for $\breve U_0 \in H^s(0, \infty)$, $h_0 \in H^{\frac{2s+1}{4}}(0, T)$,  $\breve F \in C_t([0, T]; H_x^s(0, \infty))$ with $\frac 12 < s <\frac 32$ the solution $\breve U$ to the half-line problem \eqref{Utilde-hl-ibvp} belongs to the space $C_t([0, T]; H_x^s(0, \infty)) \cap C_x([0, \infty); H_t^{\frac{2s+1}{4}}(0, T))$. 
In fact, the validity of that result can easily be extended to $0 \leq s < \frac 32$ (see the proof of estimate (2.80) in \cite{moy2025}). Hence, for the interval problem~\eqref{u-fi-ibvp} considered in the present work, we assume that $s\geq 0$ and take $u_0 \in H^s(0, \ell)$, $g_0, h_0 \in H^{\frac{2s+1}{4}}(0, T)$ and $f \in C_t([0, T]; H_x^s(0, \ell))$. In addition, by the Sobolev embedding theorem, for $s>\frac 12$ the data must also satisfy the compatibility conditions $u_0(0) = g_0(0)$ and $u_0(\ell) = h_0(0)$. Hence, since for $s>\frac 12$ Sobolev functions in $H^s$ are continuous, we deduce the following condition for the nonzero boundary datum of problem~\eqref{q-fi-ibvp}:
\eee{\label{g=0}
g(0) \equiv g_0(0) - U(0, 0) = u_0(0) - U_0(0) = 0, \quad s>\frac 12.
}
Moreover, according to the result of \cite{hmy2019-rd} stated above, for any $a \in H^{\frac{2s+1}{4}}(0, T)$ we have $v \in C_t([0, T]; H_x^s(0, \infty)) \cap C_x([0, \infty); H_t^{\frac{2s+1}{4}}(0, T))$ and so any $b$ satisfying \eqref{ab-cond} must belong to $H^{\frac{2s+1}{4}}(0, T)$. In turn, once again via \cite{hmy2019-rd}, this implies that $\breve w \in C_t([0, T]; H_x^s(0, \infty)) \cap C_x([0, \infty); H_t^{\frac{2s+1}{4}}(0, T))$ or, equivalently, $w \in C_t([0, T]; H_x^s(-\infty, \ell)) \cap C_x((-\infty, \ell]; H_t^{\frac{2s+1}{4}}(0, T))$. Therefore, by the existence of traces for $v, w$ when $s>\frac 12$, we find that $a, b$ must satisfy the conditions
\eee{\label{a=0}
\begin{aligned}
&a(0) \equiv v(0, 0) = v(x, 0)\big|_{x=0} = 0, 
\\
&b(0) \equiv w(\ell, 0) = w(x, 0)\big|_{x=\ell} = 0, 
\end{aligned}
\quad s>\frac 12.
}
Note that these conditions are consistent with \eqref{ab-cond} in view of \eqref{g=0}. 

More generally, for any $n\in\mathbb N$ and $s>2n+\frac 12$, which amounts to $\frac{2s+1}{4} > n + \frac 12$, the regularity of the initial and boundary data implies the existence of traces such that, by means of \eqref{q-fi-ibvp}, 
\eee{\label{gn=0}
g^{(n)}(0) \equiv \p_t^n q(0, 0) 
= \p_t^n q(x, t)\big|_{x=t=0}
= \p_x^{2n} q(x, t)\big|_{x=t=0}
= \p_x^{2n} q(x, 0)\big|_{x=0}
= 0.
}
Similarly, for the positive half-line problem in \eqref{vw-hl-ibvp-i}, we have
\eee{\label{an=0}
a^{(n)}(0) \equiv \p_t^n v(0, t)\big|_{t=0} =   \p_t^n v(x, t)\big|_{x=t=0}  = 
 \p_x^{2n} v(x, t)\big|_{x=t=0} =  \p_x^{2n} v(x, 0)\big|_{x=0}  = 0, \quad s>2n + \frac 12,
}
which is consistent with \eqref{ab-cond} in view of \eqref{gn=0}. An analogous equality consistent with \eqref{ab-cond} also holds for $b^{(n)}(0)$.

Theorem \ref{heat-t} on the existence of $a, b$ that satisfy \eqref{ab-cond} is established below. Before giving the proof, we highlight the significance of Theorem \ref{heat-t} via the following nonlinear well-posedness result, which corresponds to Theorem~1.3 in~\cite{hmy2019-rd} and, in our case, follows via a straightforward application of Theorem \ref{heat-t} along with the results of~\cite{hmy2019-rd} on the linear \textit{half-line} problem that were summarized above \eqref{g=0}:
\begin{theorem}[Nonlinear reaction-diffusion on a finite interval]\label{lwp-rd-t}
For $\frac 12 < s < \frac 32$ and $p \in 2\mathbb N + 1$, the finite interval problem for the nonlinear reaction-diffusion equation
\ddd{\label{rd-ibvp}
&u_t - u_{xx} = |u|^{p-1} u, \quad x\in (0, \ell), \ t\in (0, T), 
\\
&u(x, 0) = u_0(x) \in H^s(0, \ell),
\\
&u(0, t) = g_0(t) \in H^{\frac{2s+1}{4}}(0, T), 
\quad
u(\ell, t) = h_0(t) \in H^{\frac{2s+1}{4}}(0, T), 
}
with  the compatibility conditions $u_0(0) = g_0(0)$ and $u_0(\ell) = h_0(0)$, is locally well-posed in the sence of Hadamard, namely, for an appropriate lifespan $T^*>0$ that depends on the size of the data, it admits a unique solution in the space $C_t([0, T^*]; H_x^s(0, \ell))$  which depends continuously on the data.
\end{theorem}

\begin{proof}
Let $S\big[u_0, g_0, h_0; f\big]$ denote the solution to the full linear problem~\eqref{u-fi-ibvp} on the finite interval. Then, thanks to Theorem \ref{heat-t}, we have the decomposition
\eee{\label{dec-p}
S\big[u_0, g_0, h_0; f\big]  
=
U|_{(0, \ell)} + v|_{(0, \ell)} + w|_{(0, \ell)},
} 
where $U$ is the solution to the half-line problem \eqref{U-hl-ibvp} and $v, w$ are the solutions to the half-line problems \eqref{vw-hl-ibvp-i} with boundary data $a, b$ satisfying \eqref{ab-cond} where $g$ is given in \eqref{q-fi-ibvp} (such data do exist thanks to Theorem \ref{heat-t}). 
As noted above \eqref{a=0}, each of the three half-line problems on the right side of \eqref{dec-p} have been estimated in the  space $X_T = C_t([0, T]; H_x^s(0, \ell))$ (see \cite{hmy2019-rd}). In particular, by the triangle inequality, the decomposition \eqref{dec-p} and the estimates of \cite{hmy2019-rd} imply
\eee{\label{dec-est}
\no{S\big[u_0, g_0, h_0; f\big]}_{X_T}
=
\no{u_0}_{H_x^s(0, \ell)} + \no{g_0}_{H^{\frac{2s+1}{4}}(0, T)} + \no{h_0}_{H^{\frac{2s+1}{4}}(0, T)} + \sqrt T \no{f}_{X_T}, \quad \frac 12 < s < \frac 32. 
} 
This forced linear estimate allows us to establish Theorem \ref{lwp-rd-t} via a fixed point argument after replacing $f = f(u) = |u|^{p-1} u$. In particular, for the iteration map
\eee{
 u \mapsto \Phi(u) := S\big[u_0, g_0, h_0; f(u)\big],
}
estimate \eqref{dec-est} combined with suitable nonlinear estimates from the theory of the \textit{Cauchy problem} (e.g. for $s>\frac 12$ this is just the Sobolev algebra property) implies the existence of a $T^* \in (0, T]$ such that  $\Phi$ is a contraction on an appropriate neighborhood of $X_{T^*}$, thereby resulting in a unique fixed point of $\Phi$ which in turn amounts to a unique solution to the nonlinear problem \eqref{rd-ibvp}.
\end{proof}

\begin{remark}\label{wp-r}
The above proof can be adapted to any setting that allows one to express the linearization of a nonlinear finite interval problem as the sum of half-line problems. In particular, it can be used to deduce the analogue of Theorem \ref{lwp-rd-t} in the low regularity setting of $\frac 12 - \frac 1p < s < \frac 12$ with $p=2,3,4,\ldots$ (namely, to deduce Theorem 5.1 of\cite{hmy2019-rd}) as well as in the even lower range  $0\leq s < \frac 12 - \frac 1p$ addressed for certain nonlinearities in~\cite{o2026}. Moreover, thanks to Theorem~\ref{kdv-t} (which is proved in Section~\ref{kdv-s}), the same reasoning can be employed for deducing corresponding well-posedness results for the KdV equation on a finite interval.  
\end{remark}

We now proceed to the proof of Theorem \ref{heat-t}, which will be accomplished in several steps by extracting an integral equation for $a$ through the combination of \eqref{ab-cond} with the solution formula obtained for the positive half-line problem in \eqref{vw-hl-ibvp-i} via the unified transform. 

\subsection{An integral equation for $a(t)$}
\label{iea-ss}

By means of the unified transform, the positive half-line problem in~\eqref{vw-hl-ibvp-i} has been shown to admit the solution formula (see (16) in \cite{f2008})
\begin{equation}\label{heat-utm-t}
v(x, t) = \frac{1}{i\pi} \int_{k\in\p \mathcal D} e^{ikx - k^2t} \, k \, \widetilde a(k^2, t) dk,
\end{equation}
where $\p \mathcal D$ is the positively oriented boundary of the region 
\eee{\label{d-def}
\mathcal D:= \left\{k\in\mathbb C: \text{Im}(k) > 0 \text{ and } \text{Re}(k^2)<0\right\} \equiv \left\{k \in \mathbb C: \frac \pi 4 < \arg(k) < \frac{3\pi}{4}\right\}
}
depicted in Figure \ref{d-heat-f} and
\begin{equation}\label{atil-def}
\widetilde a(k^2, t) 
:=
\int_{z=0}^t e^{k^2 z} a(z) dz.
\end{equation}
The same formula but with $b$ in place of $a$ gives the solution to the positive half-line problem \eqref{w-hl-ibvp}. Thus, recalling that $w(x, t) = \breve w(\ell-x, t)$, we deduce the corresponding solution formula  for the negative half-line problem in \eqref{vw-hl-ibvp-i} as
\begin{equation}\label{w-utm-t}
w(x, t) = \frac{1}{i\pi} \int_{k\in\p \mathcal D} e^{ik(\ell-x) - k^2t} \, k \, \widetilde b(k^2, t) dk.
\end{equation}

\begin{figure}[ht]
\centering
\vspace{1.5cm}
\begin{tikzpicture}[scale=1]
\pgflowlevelsynccm
\draw[line width=.5pt, black, dotted](1.8,0)--(-1.8,0);
\draw[line width=.5pt, black, dotted](0,0.7)--(0,-0.5);
\draw[line width=.5pt, black, dotted](0,1.2)--(0,1.7);
\draw[line width=.5pt, black](1.5,1.8)--(1.5,1.5);
\draw[line width=.5pt, black](1.5,1.5)--(1.8,1.5);
\node[] at (1.59, 1.685) {\fontsize{8}{8} $k$};
\node[] at (-0.23, -0.23) {\fontsize{10}{10} $0$};
\draw[middlearrow={Stealth[scale=1.3, reversed]}] (0,0) -- (135:1.8);
\draw[middlearrow={Stealth[scale=1.3]}] (0,0) -- (45:1.8);
\node[] at (0, 0.98) {\fontsize{10}{10}\it $\mathcal D$};
\end{tikzpicture}
\vspace{7mm}
\caption{The region $\mathcal D$ and its positively oriented boundary $\p \mathcal D$.}
\label{d-heat-f}
\end{figure}
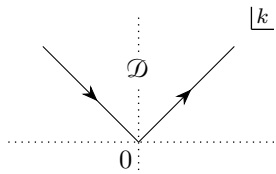

The formulae \eqref{heat-utm-t} and \eqref{w-utm-t} combined with the conditions \eqref{ab-cond} yield the system of integral equations
\begin{align}
a(t) &= g(t) - \frac{1}{i\pi} \int_{k\in\p \mathcal D} e^{ik\ell - k^2t} \, k \, \widetilde b(k^2, t) dk,
\label{a-sys}
\\
b(t) &= -\frac{1}{i\pi} \int_{k\in\p \mathcal D} e^{ik\ell - k^2t} \, k \, \widetilde a(k^2, t) dk.
\label{b-sys}
\end{align}
Due to the presence of $g$ on the right side of \eqref{a-sys}, it is more convenient to eliminate $b$ in favor of an integral equation for $a$. 
Specifically, substituting for  $\widetilde b$ in \eqref{a-sys} via  \eqref{atil-def} and \eqref{b-sys} and using Fubini's theorem, we have
\begin{equation}
a(t) 
= g(t) - \frac{1}{\pi^2}  \int_{z=0}^t  \left(\int_{k\in\p \mathcal D} e^{ik\ell - k^2(t-z)} \, k dk\right)
\int_{r=0}^t a(r) 
\left(
\int_{\lambda\in\p \mathcal D} e^{i\lambda\ell - \lambda^2(z-r)} \, \lambda \, d\lambda 
\right)
dr \, dz.
\label{a-temp}
\end{equation}
Hence, the conditions \eqref{ab-cond} are equivalent to the system \eqref{b-sys}-\eqref{a-temp}. Therefore, proving that there exist functions $a, b$ satisfying \eqref{ab-cond} amounts to proving that the integral equation \eqref{a-temp} for $a$ has a solution and then defining $b$ in terms of that solution via \eqref{b-sys}. En route to accomplishing this task, we establish two results concerning the two complex integrals on the right side of \eqref{a-temp}.

\begin{lemma}\label{j-l}
Suppose $\ell > 0$, $\sigma \leq 0$ and $C_R = \left\{Re^{i\theta}: \frac \pi 4 \leq \theta \leq \frac{3\pi}{4} \right\}$. Then, 
$$
\int_{\lambda\in\p \mathcal D} e^{i\lambda\ell - \lambda^2 \sigma} \, \lambda \, d\lambda
=
\lim_{R\to\infty} \int_{\lambda\in C_R} e^{i\lambda\ell - \lambda^2 \sigma} \, \lambda \, d\lambda = 0.
$$
\end{lemma}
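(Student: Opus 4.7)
The plan is to combine Cauchy's theorem with a Jordan-type estimate on the arc $C_R$. Since the integrand $e^{i\lambda\ell - \lambda^2\sigma}\lambda$ is entire in $\lambda$, applying Cauchy's theorem to the boundary of the truncated pie slice $\mathcal{D} \cap \{|\lambda|<R\}$ (namely, $\partial\mathcal{D} \cap \{|\lambda|\leq R\}$ together with $C_R$, oriented consistently) immediately yields
\[
\int_{\partial\mathcal{D} \cap \{|\lambda|\leq R\}} e^{i\lambda\ell - \lambda^2\sigma}\lambda\, d\lambda = \int_{C_R} e^{i\lambda\ell - \lambda^2\sigma}\lambda\, d\lambda
\]
for every $R>0$. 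Passing to the limit then links the two integrals in the statement, so everything reduces to showing that the right-hand side vanishes as $R\to\infty$.

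For the Jordan-type estimate, I would parameterize $\lambda = Re^{i\theta}$ with $\theta\in[\pi/4,3\pi/4]$ and bound the two exponential factors separately. The factor $e^{i\lambda\ell}$ has modulus $e^{-\ell R\sin\theta}\leq e^{-\ell R/\sqrt{2}}$, since $\sin\theta \geq 1/\sqrt{2}$ on this range, and this is the source of the decisive exponential decay. The factor $e^{-\lambda^2\sigma}$ has modulus $e^{-\sigma R^2\cos(2\theta)}$; here the hypothesis $\sigma\leq 0$ is used together with the observation that $\cos(2\theta)\leq 0$ for $2\theta \in [\pi/2,3\pi/2]$, which keeps this modulus at most $1$. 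Combined with $|\lambda|=R$ and the arc length $\pi R/2$, this gives
\[
\left|\int_{C_R} e^{i\lambda\ell - \lambda^2\sigma}\lambda\, d\lambda\right| \leq \frac{\pi}{2}\, R^2 e^{-\ell R/\sqrt{2}},
\]
which tends to zero as $R\to\infty$.

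The improper integral on $\partial\mathcal{D}$ is absolutely convergent by the same exponential decay applied along each ray (noting that $|e^{-\lambda^2\sigma}|=1$ on $\partial\mathcal{D}$ since $\mathrm{Re}(\lambda^2)=0$ there, while $|e^{i\lambda\ell}|=e^{-r\ell/\sqrt{2}}$ with $r=|\lambda|$), so taking $R\to\infty$ in the Cauchy identity produces the chain of equalities in the statement with common value zero. I do not foresee a genuine obstacle: the wedge $\mathcal{D}$ is designed precisely so that $\mathrm{Re}(\lambda^2)\leq 0$ on $\overline{\mathcal{D}}$, neutralizing $e^{-\lambda^2\sigma}$ whenever $\sigma \leq 0$, while the factor $e^{i\lambda\ell}$ with $\ell>0$ provides the Jordan-type decay needed to kill the arc contribution.
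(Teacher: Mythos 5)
Your proof is correct and follows essentially the same route as the paper: Cauchy's theorem for an entire integrand reduces the claim to the arc integral over $C_R$, which is then killed by the bound $|e^{i\lambda\ell}|\leq e^{-\ell R/\sqrt{2}}$ on $\frac{\pi}{4}\leq\theta\leq\frac{3\pi}{4}$ together with $|e^{-\lambda^2\sigma}|\leq 1$ from $\sigma\leq 0$ and $\cos(2\theta)\leq 0$, yielding the same $\frac{\pi}{2}R^2e^{-\ell R/\sqrt{2}}\to 0$ estimate. Your added remark on the absolute convergence of the improper integral along $\partial\mathcal{D}$ is a small but welcome refinement of the paper's argument.
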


\begin{proof}
The first equality follows directly from the definition of the region $\mathcal D$ via analyticity and Cauchy's theorem. Hence, we only need to establish the second equality. 
This is straightforward since, parametrizing along $C_R$ 
and noting that for $\frac \pi4 \leq \theta \leq \frac{3\pi}{4}$ we have $\frac{1}{\sqrt 2} \leq \sin\theta \leq 1$ and $-1 \leq \cos(2\theta) \leq 0$, we have
\eee{
\left|
\int_{\lambda\in C_R} e^{i\lambda\ell - \lambda^2 \sigma} \, \lambda \, d\lambda
\right|
\leq 
R^2 \int_{\theta=\frac \pi4}^{\frac{3\pi}{4}} e^{-R \ell \sin\theta - R^2 \sigma \cos(2\theta) }  d\theta
\leq
R^2 \int_{\theta=\frac \pi4}^{\frac{3\pi}{4}} e^{-\frac{R \ell}{\sqrt 2}}  d\theta = 
\frac \pi 2 \, e^{-\frac{R \ell}{\sqrt 2}} \, R^2 \to 0, \quad R\to\infty,
\nn
}
as desired, due to the fact that $\ell>0$. Note that we only require $\sigma \leq 0$ (as opposed to $\sigma<0$) because we do not rely on the decay of the time exponential inside $\mathcal D$, as this feature is lost at the boundary $\p \mathcal D$. 
\end{proof}

Lemma \ref{j-l} takes care of the $\lambda$-integral in \eqref{a-temp} when $z-r\leq 0$. The case of $z-r>0$, which is relevant for both the $\lambda$-integral and the $k$-integral in \eqref{a-temp}, will be handled via the following result.
\begin{lemma}\label{j2-l}
Suppose $\ell>0$,  $\sigma>0$. Then, 
$$
\int_{\lambda\in \p \mathcal D} e^{i\lambda\ell - \lambda^2 \sigma} \, \lambda \, d\lambda 
=
\frac{i\sqrt \pi \, \ell e^{-\frac{\ell^2}{4\sigma}}}{2\sigma^{\frac 32}}.
$$
\end{lemma}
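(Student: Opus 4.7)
The plan is to deform $\partial\mathcal{D}$ to the real axis and then evaluate the resulting Gaussian-type integral by completing the square. Since $f(\lambda) := e^{i\lambda\ell - \lambda^2 \sigma}\lambda$ is entire, Cauchy's theorem lets us replace $\partial\mathcal{D}$ by any homotopic path. In contrast to Lemma \ref{j-l}, here we cannot deform \emph{inside} $\mathcal{D}$ because the factor $e^{-\lambda^2\sigma}$ \emph{grows} there when $\sigma>0$; instead the contour must be pushed outward through the two narrow wedges of the upper half plane lying outside $\mathcal{D}$, namely $W_+ := \{0<\arg\lambda<\pi/4\}$ and $W_- := \{3\pi/4<\arg\lambda<\pi\}$.

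Concretely, for large $R>0$ I would apply Cauchy's theorem to the truncated wedges $W_\pm\cap\{|\lambda|<R\}$, whose boundaries consist of truncated rays of $\partial\mathcal{D}$, a segment of the real axis, and a circular arc $A_\pm$. Summing the two identities yields
$$\int_{\partial \mathcal D\cap\{|\lambda|\leq R\}} f\,d\lambda \;=\; \int_{-R}^{R} f(t)\,dt \;+\; \int_{A_+} f\,d\lambda \;+\; \int_{A_-} f\,d\lambda,$$
so the task reduces to proving that the arc integrals vanish as $R\to\infty$. Writing $\lambda = Re^{i\theta}$, we have $|f(\lambda)| = R\,e^{-R\ell\sin\theta - R^2\sigma\cos(2\theta)}$, and the difficulty is that on $A_+$ (say $\theta\in[0,\pi/4]$) neither exponential alone yields decay throughout: at $\theta=0$ we have $\sin\theta = 0$ while $\cos(2\theta)=1$ gives strong decay from $e^{-\lambda^2\sigma}$, and at $\theta=\pi/4$ the situation is reversed. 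The resolution is to split $A_+$ at $\theta=\pi/8$: on $[0,\pi/8]$ use $\cos(2\theta)\geq 1/\sqrt 2$ to obtain a bound of order $R^2 e^{-R^2\sigma/\sqrt 2}$, while on $[\pi/8,\pi/4]$ use $\sin\theta\geq \sin(\pi/8)$ to obtain a bound of order $R^2 e^{-R\ell\sin(\pi/8)}$; both tend to zero as $R\to\infty$, and $A_-$ is handled symmetrically by reflection.

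Once the deformation is justified, the evaluation is routine. Completing the square gives $i\lambda\ell - \lambda^2\sigma = -\sigma(\lambda - i\ell/(2\sigma))^2 - \ell^2/(4\sigma)$, and the substitution $u=\lambda-i\ell/(2\sigma)$, combined with a further horizontal deformation from $\mathbb R - i\ell/(2\sigma)$ back to $\mathbb R$ (justified by the rapid Gaussian decay as $\mathrm{Re}\,u\to\pm\infty$), produces
$$\int_{-\infty}^\infty f(t)\,dt \;=\; e^{-\ell^2/(4\sigma)} \int_{-\infty}^\infty e^{-\sigma u^2}\Bigl(u+\tfrac{i\ell}{2\sigma}\Bigr)du \;=\; e^{-\ell^2/(4\sigma)}\cdot\frac{i\ell}{2\sigma}\cdot\sqrt{\frac{\pi}{\sigma}} \;=\; \frac{i\sqrt \pi\,\ell\, e^{-\ell^2/(4\sigma)}}{2\sigma^{3/2}},$$
the odd contribution $\int u\,e^{-\sigma u^2}\,du$ vanishing by symmetry. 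The main obstacle in this plan is the arc-integral estimate, precisely because the forbidden region $\mathcal D$ forces us to exit through a wedge in which neither exponential factor provides decay up to both endpoints; the $\theta=\pi/8$ split is the essential trick that circumvents this.
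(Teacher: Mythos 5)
Your proposal is correct and follows essentially the same route as the paper: deform $\partial\mathcal D$ to the real axis through the two wedges $\{0<\arg\lambda<\pi/4\}$ and $\{3\pi/4<\arg\lambda<\pi\}$, kill the arc contributions by splitting at $\theta=\pi/8$ (using $\cos(2\theta)\geq 1/\sqrt 2$ on one piece and $\sin\theta\geq\sin(\pi/8)$ on the other), and then evaluate the resulting real-line Gaussian integral. The only cosmetic difference is that you compute that last integral by completing the square, whereas the paper simply quotes the known Fourier transform of $\lambda e^{-\sigma\lambda^2}$; both yield the stated value.
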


\begin{proof}
It suffices to show that
\eee{\label{j2-eq}
\int_{\lambda\in \p \mathcal D} e^{i\lambda\ell - \lambda^2 \sigma} \, \lambda \, d\lambda
=
\int_{\lambda\in \mathbb R} e^{i\lambda\ell - \lambda^2 \sigma} \, \lambda \, d\lambda
}
since then the desired result follows from the fact that $\mathcal F\big\{e^{- \lambda^2 \sigma} \, \lambda\big\}(x) = \frac{\sqrt \pi \, x e^{-\frac{x^2}{4\sigma}}}{2i\sigma^{\frac 32}}$ for any $\sigma>0$.
To prove \eqref{j2-eq}, we note that by analyticity and Cauchy's theorem
$$
\int_{\lambda\in \p \mathcal D} e^{i\lambda\ell - \lambda^2 \sigma} \, \lambda \, d\lambda
=
\int_{\lambda\in \mathbb R} e^{i\lambda\ell - \lambda^2 \sigma} \, \lambda \, d\lambda
-
\lim_{R\to\infty}
\int_{\lambda\in \widetilde C_R} e^{i\lambda\ell - \lambda^2 \sigma} \, \lambda \, d\lambda
$$
where $\widetilde C_R = \left\{Re^{i\theta}: \theta \in \left[0, \frac \pi 4\right] \cup \left[\frac{3\pi}{4},  \pi\right] \right\}$.
We have
\begin{align*}
\left|
\int_{\lambda\in \widetilde C_R} e^{i\lambda\ell - \lambda^2 \sigma} \, \lambda \, d\lambda
\right|
&\leq
R^2 \left( \int_{\theta=0}^{\frac{\pi}{4}} + \int_{\theta=\frac{3\pi}{4}}^\pi \right)
e^{-R \ell \sin\theta - R^2 \sigma \cos(2\theta)} \, d\theta
\nn\\
&=
2R^2  \int_{\theta=0}^{\frac{\pi}{8}}  e^{-R \ell \sin\theta - R^2 \sigma \cos(2\theta)} \, d\theta
+
2R^2  \int_{\theta=\frac{\pi}{8}}^{\frac \pi 4}  e^{-R \ell \sin\theta - R^2 \sigma \cos(2\theta)} \, d\theta.
\end{align*}
For the first integral, we use that fact that $\sin\theta \geq 0$, $\cos(2\theta) \geq \frac{1}{\sqrt 2}$ and $\sigma>0$ to infer
$$
R^2 \int_{\theta=0}^{\frac{\pi}{8}}  e^{-R \ell \sin\theta - R^2 \sigma \cos(2\theta)} \, d\theta
\leq
R^2 \int_{\theta=0}^{\frac{\pi}{8}}  e^{-\frac{R^2 \sigma}{\sqrt 2}} \, d\theta
=
R^2 e^{-\frac{R^2 \sigma}{\sqrt 2}} \, \frac{\pi}{8} \to 0, \quad R \to \infty.
$$
For the second integral, we observe that $\sin\theta \geq \sin(\frac \pi 8) > 0$,  $\cos(2\theta) \geq 0$ and $\ell>0$ to deduce
$$
R^2  \int_{\theta=\frac{\pi}{8}}^{\frac \pi 4}  e^{-R \ell \sin\theta - R^2 \sigma \cos(2\theta)} \, d\theta
\leq
R^2  \int_{\theta=\frac{\pi}{8}}^{\frac \pi 4}  e^{-R \ell \sin(\frac \pi 8)} \, d\theta
=
R^2 e^{-R \ell \sin(\frac \pi 8)} \, \frac \pi 8 \to 0, \ R \to \infty.
$$
Thus, we overall conclude that $\lim_{R\to\infty} \int_{\lambda\in \widetilde C_R} e^{i\lambda\ell - \lambda^2 \sigma} \, \lambda \, d\lambda = 0$,
which implies \eqref{j2-eq}.
\end{proof}

Define  the function 
\begin{equation}\label{ldef}
\Lambda_\ell(\sigma) :=
\left\{
\begin{array}{ll}
0, &\sigma \leq 0,
\\
\dfrac{e^{-\frac{\ell^2}{4\sigma}}}{\sigma^{\frac 32}},
&\sigma>0,
\end{array}
\right.
\end{equation}
which is continuous since for any $\ell>0$ we have $\displaystyle \lim_{\sigma\to 0^+} \Lambda_\ell(\sigma) = 0$. 
Then, combining Lemmas \ref{j-l} and \ref{j2-l}, 
\begin{equation}\label{dp-int}
\int_{\lambda\in\p \mathcal D} e^{i\lambda\ell - \lambda^2 \sigma} \, \lambda \, d\lambda
=
\frac{i\sqrt \pi \, \ell}{2}\Lambda_\ell(\sigma),
\quad \sigma \in \mathbb R.
\end{equation}
Hence,  \eqref{a-temp} becomes
\begin{equation}\label{a-int-eq}
a(t) = g(t) 
+ \frac{\ell^2}{4\pi} 
\int_{z=0}^t 
\Lambda_\ell(t-z)
\int_{r=0}^z  
\Lambda_\ell(z-r)
\, a(r) \, dr \, dz
\end{equation}
where the upper limit of the $r$-integral has changed from $t$ to $z$ in view of the fact that $\Lambda_\ell(\sigma) = 0$ for $\sigma \leq 0$.

\subsection{Existence in $L^2(0, T)$}
\label{cml2-ss}

We now use a contraction mapping argument to show that, given $T>0$ sufficiently small, the integral equation~\eqref{a-int-eq} possesses a solution in $L^2(0, T)$. In turn, this proves that the system \eqref{b-sys}-\eqref{a-temp} and, equivalently, the original conditions \eqref{ab-cond} admit a pair of solutions $a, b \in L^2(0, T)$. 

Consider the map  
\begin{equation}\label{phia-def}
a(t) \mapsto \Phi_g[a](t) 
:=
g(t) 
+ 
\frac{\ell^2}{4\pi} 
\int_{z=0}^t 
\Lambda_\ell(t-z)
\int_{r=0}^z  
\Lambda_\ell(z-r) \, a(r) \, dr \, dz.
\end{equation}
By the triangle inequality, 
\begin{align}
\no{\Phi_g[a]}_{L^2(0, T)} 
&\leq
\no{g}_{L^2(0, T)} 
+
\frac{\ell^2}{4\pi} 
\left(
\int_{t=0}^T
\left|
\int_{z=0}^t 
\Lambda_\ell(t-z)
\int_{r=0}^z  
\Lambda_\ell(z-r)
\, a(r) \, dr \, dz
\right|^2 dt
\right)^{\frac 12}.
\label{cest1}
\end{align}
In view of the inequality 
$
0< 
\sigma^{-\frac 32} e^{-\frac{\ell^2}{4\sigma}}
\leq
\left(\frac{6}{e \ell^2} \right)^{\frac 32}$, $\sigma>0$,
we have
\begin{equation}\label{L-bound}
0\leq \Lambda_\ell(\sigma) \leq \left(\frac{6}{e \ell^2} \right)^{\frac 32}, \quad \sigma \in \mathbb R.
\end{equation}
Thus, continuing from \eqref{cest1} we find 
\ddd{
\no{\Phi_g[a]}_{L^2(0, T)} 
&\leq
\no{g}_{L^2(0, T)} 
+
\frac{\ell^2}{4\pi} 
\left(\frac{6}{e \ell^2} \right)^3
\left(
\int_{t=0}^T
\left(
\int_{z=0}^t 
\int_{r=0}^z  
\left|a(r)\right| dr \, dz
\right)^2 dt
\right)^{\frac 12}
\nn\\
&\leq
\no{g}_{L^2(0, T)} 
+
\frac{\ell^2}{4\pi} 
\left(\frac{6}{e \ell^2} \right)^3
\left(
\int_{t=0}^T
\left(
\int_{z=0}^t
\int_{r=0}^T  
\left|a(r)\right| dr \, dz
\right)^2 dt
\right)^{\frac 12}
\nn\\
&=
\no{g}_{L^2(0, T)} 
+
\frac{\ell^2}{4\pi} 
\left(\frac{6}{e \ell^2} \right)^3
\frac{T^{\frac 32}}{\sqrt 3} \no{a}_{L^1(0, T)}
}
so by the Cauchy-Schwarz inequality
\eee{\label{Phia-l2-est}
\no{\Phi_g[a]}_{L^2(0, T)} 
\leq
\no{g}_{L^2(0, T)} 
+
\frac{18 \sqrt 3 \, T^2}{\pi e^3 \ell^4} \no{a}_{L^2(0, T)}.
}

Let $B(0, \rho) \subset L^2(0, T)$ denote the closed ball of radius $\rho = 2\no{g}_{L^2(0, T)}$ centered at zero. Then, for any $a \in B(0, \rho)$ we have
$$
\no{\Phi_g[a]}_{L^2(0, T)}  \leq \frac \rho 2 + \frac{18 \sqrt 3 \, T^2}{\pi e^3 \ell^4} 
\rho 
=
\left(\frac 12 + \frac{18 \sqrt 3 \, T^2}{\pi e^3 \ell^4} \right)\rho 
$$
so if $T>0$ is such that 
\eee{\label{T-L2}
\frac{18 \sqrt 3 \,T^2}{\pi e^3 \ell^4} \leq \frac 12
\ \Leftrightarrow \
\frac{T}{\ell^2} \leq \frac{\sqrt \pi e^{\frac 32}}{2 \cdot 3^{\frac 54}} \simeq 1
}
then $\Phi_g[a] \in B(0, \rho)$. 
Moreover,  for any $a_1, a_2 \in B(0, \rho)$ we have
\eee{\label{Phia-l2-contr}
\no{\Phi_g[a_1] - \Phi_g[a_2]}_{L^2(0, T)}  
\leq
\frac{18 \sqrt 3 \, T^2}{\pi e^3 \ell^4} \no{a_1-a_2}_{L^2(0, T)}
}
which in view of \eqref{T-L2}  implies that the map $\Phi_g[a]$ is a contraction on $B(0, \rho)$.
Thus, by Banach's fixed point theorem, $\Phi_g[a]$ has a unique fixed point in $B(0, \rho)$, which amounts to a unique solution of the integral equation~\eqref{a-int-eq} for $a$ in  $B(0, \rho)$.  
Furthermore, having proved the existence of such a solution as a fixed point of $\Phi_g[a]$, we can return to \eqref{Phia-l2-est} and obtain the improved size estimate 
\begin{equation}\label{a-l2-est}
\no{a}_{L^2(0, T)} 
\leq
\frac{1}{1-\frac{18 \sqrt 3 \, T^2}{\pi e^3 \ell^4}} \no{g}_{L^2(0, T)}.
\end{equation}
In summary, if the ratio $T/\ell^2$ is sufficiently small such that \eqref{T-L2} is satisfied, then there exists a unique $a \in B(0, 2\no{g}_{L^2(0, T)})\subset L^2(0, T)$ that solves the integral equation \eqref{a-int-eq}.

\subsection{Existence in $H^1(0, T)$}
\label{hse-ss}
By the definition \eqref{ldef}, 
$
\displaystyle \lim_{\sigma\to 0^+} \tfrac{\Lambda_\ell(\sigma)-\Lambda_\ell(0)}{\sigma-0}
=
\lim_{\sigma\to 0^+} \sigma^{-\frac 52} e^{-\frac{\ell^2}{4\sigma}}
= 0
$
and
$
\displaystyle \lim_{\sigma \to 0^-} \tfrac{\Lambda_\ell(\sigma)-\Lambda_\ell(0)}{\sigma-0}
= 0
$
so that $\Lambda_\ell'(0) = 0$. Hence, $\Lambda_\ell \in C^1(\mathbb R)$ with
\eee{\label{L'}
\Lambda_\ell'(\sigma) 
=
\left\{
\begin{array}{ll}
0, &\sigma\leq0,
\\[1mm]
\dfrac{\ell^2 - 6\sigma}{4\sigma^{\frac 72}} \, e^{-\frac{\ell^2}{4\sigma}},
&\sigma>0.
\end{array}
\right. 
} 
Therefore, differentiating \eqref{phia-def} with respect to $t$ and using the Leibniz integral rule,
we obtain
\begin{align*}
\Phi_g[a]'(t)
&= g'(t) 
+ \frac{\ell^2}{4\pi} 
\Lambda_\ell(0)
\int_{r=0}^t
\Lambda_\ell(t-r) \, a(r) \, dr 
+ \frac{\ell^2}{4\pi} 
\int_{z=0}^t 
\p_t \big(\Lambda_\ell(t-z)\big) 
\int_{r=0}^z  
\Lambda_\ell(z-r) \, a(r) \, dr \, dz
\nn\\
&= g'(t) 
- \frac{\ell^2}{4\pi} 
\int_{z=0}^t 
\p_z \big(\Lambda_\ell(t-z)\big) 
\int_{r=0}^z  
\Lambda_\ell(z-r) \, a(r) \, dr \, dz
\end{align*}
upon observing that  $\p_t \big(\Lambda_\ell(t-z)\big)  = -\p_z \big(\Lambda_\ell(t-z)\big) $.
Integrating by parts with respect to $z$ and proceeding in a similar way as above, we further obtain
\begin{align*}
\Phi_g[a]'(t)
&= g'(t) 
- \frac{\ell^2}{4\pi} 
\left[
\Lambda_\ell(t-z)
\int_{r=0}^z  
\Lambda_\ell(z-r) \, a(r) \, dr
\right]_{z=0}^t 
+ 
\frac{\ell^2}{4\pi} 
\int_{z=0}^t 
\Lambda_\ell(t-z)
\, \p_z 
\left(
\int_{r=0}^z  
\Lambda_\ell(z-r) \, a(r) \, dr
\right) dz
\nn\\
&= g'(t) 
- 
\frac{\ell^2}{4\pi} 
\int_{z=0}^t 
\Lambda_\ell(t-z)
\int_{r=0}^z  
\p_r 
\big( \Lambda_\ell(z-r) \big) \,
a(r) \, dr \, dz.
\end{align*}
Thus, integrating by parts in the $r$-integral, we find
\eee{\label{ibp}
\Phi_g[a]'(t)
= g'(t) 
+ \frac{\ell^2}{4\pi} 
\int_{z=0}^t 
\Lambda_\ell(t-z)
\left[
\Lambda_\ell(z) a(0)
+
\int_{r=0}^z  
\Lambda_\ell(z-r) \, a'(r) \, dr
\right] dz.
}
Therefore, by the condition \eqref{a=0}, which must hold since $H^1(0, T)$ corresponds to $H^{\frac{2s+1}{4}}(0, T)$ with $s=\frac 32$, we conclude that
\begin{equation}\label{a'-int-eq}
\Phi_g[a]'(t)
= g'(t) 
+ \frac{\ell^2}{4\pi} 
\int_{z=0}^t 
\Lambda_\ell(t-z)
\int_{r=0}^z  
\Lambda_\ell(z-r) \, a'(r) \, dr \, dz
\equiv
\Phi_{g'}[a'](t).
\end{equation}
Thanks to this observation,  the previously derived $L^2$-estimate \eqref{Phia-l2-est} with $a', g'$ in place of $a, g$ yields the following estimate for the $L^2$-norm of $\Phi_g[a]'$: 
\begin{equation}\label{Phia'-l2-est}
\no{\Phi_g[a]'}_{L^2(0, T)} 
=
\no{\Phi_{g'}[a']}_{L^2(0, T)} 
\leq
\no{g'}_{L^2(0, T)} 
+
\frac{18 \sqrt 3 \, T^2}{\pi e^3 \ell^4} \no{a'}_{L^2(0, T)}.
\end{equation}

Combining estimates \eqref{Phia-l2-est} and \eqref{Phia'-l2-est} we deduce
\begin{equation}\label{Phia-h1-est}
\no{\Phi_g[a]}_{H^1(0, T)} 
\leq
\no{g}_{H^1(0, T)} 
+
\frac{18 \sqrt 3 \, T^2}{\pi e^3 \ell^4} \no{a}_{H^1(0, T)}.
\end{equation}
Similarly, observing that
\begin{equation}
\Phi_g[a_1]'(t)-\Phi_g[a_2]'(t)
= 
\frac{\ell^2}{4\pi} 
\int_{z=0}^t 
\Lambda_\ell(t-z)
\int_{r=0}^z  
\Lambda_\ell(z-r) \big[a_1'(r)-a_2'(r)\big] \, dr \, dz
\equiv
\Phi_{g'}[a_1'](t) - \Phi_{g'}[a_2'](t)
\end{equation}
and recalling the $L^2$-estimate \eqref{Phia-l2-contr}, we infer the $H^1$-estimate
\eee{\label{Phia-h1-contr}
\no{\Phi_g[a_1]-\Phi_g[a_2]}_{H^1(0, T)}
\leq
\frac{18 \sqrt 3 \, T^2}{\pi e^3 \ell^4} \no{a_1-a_2}_{H^1(0, T)}.
}
Estimates \eqref{Phia-h1-est} and \eqref{Phia-h1-contr} imply that, for any $T>0$ satisfying the condition \eqref{T-L2},  the map $a \mapsto \Phi[a]$ is a contraction in the ball $B(0, \rho) \subset H^1(0, T)$ with $\rho = 2\no{g}_{H^1(0, T)}$. Hence, there is a unique solution to the integral equation \eqref{a-int-eq} in $B(0, \rho)$ satisfying the size estimate
\begin{equation}\label{a-h1-est}
\no{a}_{H^1(0, T)} 
\leq
\frac{1}{1-\frac{18 \sqrt 3 \, T^2}{\pi e^3 \ell^4}} \no{g}_{H^1(0, T)}.
\end{equation}

\begin{remark}
Instead of transferring the derivative from $\Lambda_\ell$ to $a$, as we did via integration by parts in order to arrive at \eqref{ibp}, we could use the fact that $\Lambda_\ell'(\sigma)$ is uniformly bounded with respect to $\sigma$ in order to always work with $a \in L^2(0, T)$. 
However, in order to prove existence of solution as a fixed point in $H^1(0, T)$ via contraction, we would still need to replace the $L^2$-norm by an $H^1$-norm, so this alternative approach does not offer an advantage.
\end{remark}

\subsection{Existence in $H^m(0, T)$ for any  $m\geq 0$}
\label{frac-ss}

Suppose first that $m \in \mathbb N_0$, so that
\eee{\label{sob-int}
\no{\Phi_g[a]}_{H^m(0, T)} = \sum_{n=0}^m \big\| \Phi_g[a]^{(n)} \big\|_{L^2(0, T)}.
}
Generalizing \eqref{L'}, for any $0\leq n \leq m$ we have $\Lambda_\ell \in C^n(\mathbb R)$ with
\eee{\label{Lm}
\Lambda_\ell^{(n)}(\sigma) 
=
\left\{
\begin{array}{ll}
0, &\sigma\leq0,
\\
\dfrac{p_n(\sigma)}{q_n(\sigma)} \, e^{-\frac{\ell^2}{4\sigma}},
&\sigma>0,
\end{array}
\right.
} 
for some polynomials $p_n$, $q_n$ with $\text{deg}(p_n) < \text{deg}(q_n)$. Thus, we can employ the Leibniz integral rule repeatedly to establish, via induction, the following generalization of formula \eqref{a'-int-eq}:
\begin{equation}\label{aj-int-eq}
\Phi_g[a]^{(n)}(t)
= 
g^{(n)}(t) + \frac{\ell^2}{4\pi} 
\int_{z=0}^t 
\Lambda_\ell(t-z)
\int_{r=0}^z  
\Lambda_\ell(z-r) \, a^{(n)}(r) \, dr \, dz
\equiv
\Phi_{g^{(n)}}[a^{(n)}](t),
\end{equation}
where we have also used the fact that, by \eqref{an=0},   $a^{(n)}(0) = 0$ for all $0\leq n \leq m-1$ 
(note that \eqref{an=0} holds for all integers $0\leq n < \frac{2s-1}{4} = m - \frac 12$).
In turn, estimate \eqref{Phia-l2-est} with $a^{(n)}, g^{(n)}$ in place of $a, g$ yields
\begin{equation}\label{Phiaj-l2-est}
\big\|\Phi_g[a]^{(n)}\|_{L^2(0, T)} 
=
\big\|\Phi_{g^{(n)}}[a^{(n)}]\|_{L^2(0, T)} 
\leq
\big\|g^{(n)}\|_{L^2(0, T)} 
+
\frac{18 \sqrt 3 \, T^2}{\pi e^3 \ell^4} 
\big\|a^{(n)}\|_{L^2(0, T)}, 
\quad
n \in \mathbb N_0.
\end{equation}
Hence, in view of \eqref{sob-int} we infer
\begin{equation}\label{Phia-hj-est}
\no{\Phi_g[a]}_{H^m(0, T)} 
\leq
\no{g}_{H^m(0, T)} 
+
\frac{18 \sqrt 3 \, T^2}{\pi e^3 \ell^4} 
\no{a}_{H^m(0, T)}, \quad m \in \mathbb N_0.
\end{equation}

The case of general $m\geq 0$ can be handled by combining the integer estimate \eqref{Phia-hj-est} with the following fundamental interpolation result.
\\[2mm]
\textbf{Theorem} (Theorem 5.1 in \cite{lm1972}).
{\itshape Let $\{X, Y\}$ be a couple of Hilbert spaces and let $\{\mathcal X, \mathcal Y\}$ be a second couple of Hilbert spaces with properties analogous to the first one. Let $\mathcal M$ be a continuous linear operator of $X$ into $\mathcal X$ and of $Y$ into $\mathcal Y$, i.e. $\mathcal M \in \mathscr L(X; \mathcal X) \cap \mathscr L(Y; \mathcal Y)$. Then, 
\begin{equation}\label{inter-t}
\mathcal M \in \mathscr L\big(\left[X, Y\right]_\beta; \left[\mathcal X, \mathcal Y\right]_\beta\big) \quad \forall \beta\in (0, 1).
\end{equation}
}

\vskip 2mm

The operator $a(t) \mapsto \mathcal M[a](t) := \Phi_g[a](t) - g(t)$ is linear in $a$. Moreover, the calculations leading to estimate~\eqref{Phia-hj-est} can be easily modified to yield
\begin{equation}
\no{\Phi_g[a] - g}_{H^m(0, T)} 
\leq
\frac{18 \sqrt 3 \, T^2}{\pi e^3 \ell^4} 
\no{a}_{H^m(0, T)}, \quad m \in \mathbb N_0.
\end{equation}
Thus, in particular, for any $m = \left\lfloor m \right\rfloor + \beta$ with $\beta \in (0, 1)$, we have the estimates
\eee{
\no{\mathcal M[a]}_{H^{\lfloor m \rfloor}(0, T)} 
\leq
\frac{18 \sqrt 3 \, T^2}{\pi e^3 \ell^4} \no{a}_{H^{\lfloor m \rfloor}(0, T)},
\quad
\no{\mathcal M[a]}_{H^{\lfloor m \rfloor+1}(0, T)}
\leq
\frac{18 \sqrt 3 \, T^2}{\pi e^3 \ell^4} \no{a}_{H^{\lfloor m \rfloor+1}(0, T)}
\nn
}
which imply that $\mathcal M[a]$ is continuous  from $X = H^{\lfloor m \rfloor}(0, T)$ to $\mathcal X = H^{\lfloor m \rfloor}(0, T)$ and also 
 from $Y=H^{\lfloor m \rfloor+1}(0, T)$ to $\mathcal Y = H^{\lfloor m \rfloor+1}(0, T)$. Therefore, by the interpolation \eqref{inter-t}, $\mathcal M[a]$ is continuous (equivalently, bounded)  from  $\left[X, Y\right]_\beta = \left[H^{\lfloor m \rfloor}(0, T), H^{\lfloor m \rfloor+1}(0, T)\right]_\beta = H^m(0, T)$ to $\left[\mathcal X, \mathcal Y\right]_\beta = \left[H^{\lfloor m \rfloor}(0, T), H^{\lfloor m \rfloor+1}(0, T)\right]_\beta = H^m(0, T)$ and, consequently,
\begin{equation}
\left\| \mathcal M[a] \right\|_{H^m(0, T)}
\equiv
\no{\Phi_g[a]-g}_{H^m(0, T)}
\leq
\frac{18 \sqrt 3 \, T^2}{\pi e^3 \ell^4}
\left\| a \right\|_{H^m(0, T)}, \quad m \geq 0.
\end{equation}
Hence, since by the reverse triangle inequality $\no{\Phi_g[a]}_{H^m(0, T)} - \no{g}_{H^m(0, T)} \leq \no{\Phi_g[a]-g}_{H^m(0, T)}$, we obtain
\begin{equation}
\no{\Phi_g[a]}_{H^m(0, T)}
\leq
\no{g}_{H^m(0, T)}
+
\frac{18 \sqrt 3 \, T^2}{\pi e^3 \ell^4}
\left\| a \right\|_{H^m(0, T)}, \quad m \geq 0.
\end{equation}
This estimate is entirely analogous to \eqref{Phia-l2-est}. Furthermore, adjusting its derivation accordingly, we readily obtain the analogue of the contraction inequality \eqref{Phia-l2-contr}. Thus, for any $m\geq 0$ and $T>0$ satisfying \eqref{T-L2}, there is a unique $a \in B(0, \rho) \subset H^m(0, T)$ with $\rho = 2\no{g}_{H^m(0, T)}$ that satisfies the integral equation \eqref{a-int-eq} and admits the size estimate
\begin{equation}\label{a-hm-est}
\no{a}_{H^m(0, T)} 
\leq
\frac{1}{1-\frac{18 \sqrt 3 \, T^2}{\pi e^3 \ell^4}} \no{g}_{H^m(0, T)}, \quad m \geq 0.
\end{equation}

\begin{remark}
Instead of the above interpolation argument, one can proceed directly by  estimating the fractional Sobolev-Slobodeckij-Gagliardo seminorm. 
\end{remark}

\section{The Linearized K{\normalfont d}V Equation}
\label{kdv-s}

We turn our attention to dispersive equations and, specifically, the following initial-boundary value problem for the linearized KdV equation on a finite interval:
\begin{equation}\label{fi-kdv}
\begin{aligned}
&u_t + u_x + u_{xxx} = f(x, t), \quad x \in (0, \ell), \ t \in (0, T),
\\
&u(x, 0) = u_0(x), \quad x \in (0, \ell),
\\
&u(0, t) = g_0(t), \quad  u(\ell, t) = h_0(t), \quad  u_x(\ell, t) = h_1(t), \quad t \in (0, T).
\end{aligned}
\end{equation}
Note that the positive sign of the third derivative term implies the need for one boundary condition on the left and two on the right (a negative sign would require two boundary conditions on the left and one on the right --- see also problem \eqref{neg-hl-kdv-y} below). 

In addition, analogously to the heat equation, extending $u_0$ and $f$ from the finite interval $(0, \ell)$ to the negative half-line $(-\infty, \ell)$ via suitable functions $U_0$ and $F$ to be discussed later, we consider the negative half-line problem 
\begin{equation}\label{neg-hl-kdv} 
\begin{aligned}
&U_t + U_x + U_{xxx} = F(x, t), \quad x \in (-\infty, \ell), \ t \in (0, T),
\\
&U(x, 0) = U_0(x), \quad x \in (-\infty, \ell),
\\
&U(\ell, t) = h_0(t), \quad U_x(\ell, t) = h_1(t), \quad t \in (0, T).
\end{aligned}
\end{equation}
The need for prescribing two pieces of boundary data at $x=\ell$ in problem \eqref{neg-hl-kdv} can be appreciated by observing that the function $\breve U(x, t) := U(\ell-x, t)$ satisfies the positive half-line problem
\begin{equation}\label{neg-hl-kdv-y} 
\begin{aligned}
&\breve U_t - \breve U_x - \breve U_{xxx} = \breve F(x, t) := F(\ell-x, t), \quad x \in (0, \infty), \ t \in (0, T),
\\
&\breve U(x, 0) = \breve U_0(x) := U_0(\ell-x), \quad x \in (0, \infty),
\\
&\breve U(0, t) = h_0(t), \quad \breve U_x(0, t) = -h_1(t), \quad t \in (0, T),
\end{aligned}
\end{equation}
which involves the negative-sign linearized KdV equation and  requires two boundary conditions at $x=0$  \cite{f2008}. 

The negative-sign linearized KdV equation in problem \eqref{neg-hl-kdv-y} is a special case of the so-called higher-order nonlinear Schr\"odinger equation $i u_t + i \beta u_{xxx} + \alpha u_{xx} + i\delta u_x = \kappa |u|^{\lambda-1} u$ after setting $\alpha=\kappa=0$, $\beta=\delta=-1$. The analogue of the half-line problem \eqref{neg-hl-kdv-y} for this more general equation is studied in  \cite{amo2026} and allows us to reduce the estimation of the  interval problem \eqref{fi-kdv} to that of the following reduced interval problem  for the difference $q:=u-U|_{x\in(0, \ell)}$:
\begin{equation}\label{fi-kdv-red}
\begin{aligned}
&q_t + q_x + q_{xxx} = 0, \quad x \in (0, \ell), \ t \in (0, T),
\\
&q(x, 0) = 0, \quad x \in (0, \ell),
\\
&q(0, t) = g(t) := g_0(t) - U(0, t), \quad q(\ell, t) = 0, \quad q_x(\ell, t) = 0, \quad t \in (0, T).
\end{aligned}
\end{equation}
Note that problem \eqref{fi-kdv-red} is precisely problem \eqref{fi-kdv-red-i} with the specific choice of $g$ given above. 
Therefore, formally, the solution to problem \eqref{fi-kdv-red} can be expressed as the sum \eqref{sup-kdv-i} of the restrictions on $(0, \ell)$ of the solutions $v(x, t)$ and $w(x, t)$ to the two half-line problems in \eqref{vw-hllr-i} provided that the relevant boundary data $a(t)$ and $b(t), c(t)$ satisfy the conditions (for rough data, these conditions are understood to hold almost everywhere) 
\begin{equation}\label{abc-cond}
\begin{aligned}
a(t) &= g(t) - w(0, t),
\\
b(t) &= -v(\ell, t),
\quad
c(t) = -v_x(\ell, t).
\end{aligned}
\end{equation}
Note that the function $\breve w(x, t) := w(\ell-x, t)$ satisfies a positive half-line problem for the negative-sign linearized KdV equation, namely
\begin{equation}\label{hlr-kdv-trans}
\begin{aligned}
&\breve w_t - \breve w_x - \breve w_{xxx} = 0, \quad x \in (0,\infty), \ t \in (0, T),
\\
&\breve w(x, 0) = 0, \quad x \in (0, \infty),
\\
&\breve w(0, t) = b(t), \quad  \breve w_x(0, t) = -c(t), \quad t \in (0, T),
\end{aligned}
\end{equation}
which is of the same type as problem \eqref{neg-hl-kdv-y} above and hence, as previously noted, can be estimated via the results of \cite{amo2026}.

In view of the decomposition \eqref{sup-kdv-i}, the reduced interval problem~\eqref{fi-kdv-red} --- and, in turn, the full interval problem~\eqref{fi-kdv}  --- can be estimated through the corresponding analysis of positive half-line problems, specifically the first problem in  \eqref{vw-hllr-i} and the problems \eqref{neg-hl-kdv-y} and \eqref{hlr-kdv-trans}.  Therefore, as in the case of the heat equation, 
\textit{the  study of the linearized KdV equation and its nonlinear counterparts on the finite interval can be carried through the corresponding analysis on the half-line.}
However, in order for the formal decomposition \eqref{sup-kdv-i} to be made rigorous, one must prove that there indeed exist functions $a, b$ satisfying the required conditions \eqref{abc-cond}.

In this connection, we mention that the positive half-line problem in \eqref{vw-hllr-i}, which involves the standard (positive-sign) linearized KdV equation, was studied in the works \cite{bsz2002, ck2002, h2005, fhm2016} and was recently revisited in~\cite{amo2024} in a more general context. On the other hand, the problems \eqref{neg-hl-kdv-y} and~\eqref{hlr-kdv-trans} for the negative-sign KdV equation were considered in \cite{h2006,fam2007} and, very recently, in \cite{amo2026}.

Specifically, according to the linear theory of \cite{amo2024}, for $a \in H^{\frac{s+1}{3}}(0, T)$ with $s \geq 0$ and $s\neq \frac 12$, the solution $v$ of the positive-sign linearized KdV problem in \eqref{vw-hllr-i} belongs to the space $C_t([0, T]; H_x^s(0, \infty)) \cap C_x([0, \infty); H_t^{\frac{s+1}{3}}(0, T))$. 
Furthermore, according to the linear theory of \cite{amo2026}, if $0 \leq s \leq 2$ with $s\neq \frac 12$ then for $\breve U_0 \in H^s(0, \infty)$, $h_0 \in H^{\frac{s+1}{3}}(0, T)$, $h_1 \in H^{\frac{s}{3}}(0, T)$  and $\breve F \in C_t([0, T]; H_x^s(0, \infty))$ the solution $\breve U$ to the negative-sign linearized KdV problem~\eqref{neg-hl-kdv-y} belongs to the space $C_t([0, T]; H_x^s(0, \infty)) \cap C_x([0, \infty); H_t^{\frac{s+1}{3}}(0, T))$ with $\breve U_x \in C_x([0, \infty); H_t^{\frac{s}{3}}(0, T))$. This result also applies to the solution $\breve w$ of problem~\eqref{hlr-kdv-trans}, which is a special case of problem \eqref{neg-hl-kdv-y}, this time without the need for an upper bound on $s$, i.e. for all $s \geq 0$ with $s\neq \frac 12$. 

As a consequence of the above results, for the interval problem~\eqref{fi-kdv} considered here, we assume that $s\geq 0$ and take $u_0 \in H^s(0, \ell)$, $g_0, h_0 \in H^{\frac{s+1}{3}}(0, T)$, $h_1 \in H^{\frac s3}(0, T)$ and $f \in C_t([0, T]; H_x^s(0, \ell))$. 
Moreover, by the Sobolev embedding theorem, the data must also satisfy the compatibility conditions $u_0(0) = g_0(0)$ and $u_0(\ell) = h_0(0)$ for $s>\frac 12$, and $u_0'(\ell) = h_1(0)$ for $s>\frac 32$. Hence, since Sobolev functions in $H^s$ with $s>\frac 12$ are continuous, we deduce the following condition for the nonzero boundary datum of problem \eqref{fi-kdv-red}:
\eee{\label{g=0-kdv}
g(0) \equiv g_0(0) - U(0, 0) = u_0(0) - U_0(0) = 0, \quad s>\frac 12.
}
Additionally, by the regularity of $v, w$ stated above and the existence of traces in $H^s$ when $s>\frac 12$, the conditions~\eqref{abc-cond} and \eqref{g=0-kdv} imply that the boundary data $a, b, c$ of the two problems in \eqref{vw-hllr-i}   must satisfy  
\eee{\label{a=0-kdv}
\begin{aligned}
&a(0) \equiv v(0, 0) = v(x, 0)\big|_{x=0} = 0, 
\\
&b(0) \equiv w(\ell, 0)= w(x, 0)\big|_{x=\ell} = 0,
\end{aligned}
\quad s>\frac 12,
\qquad
c(0) \equiv w_x(\ell, 0) =  w_x(x, 0)\big|_{x=\ell} = 0, \quad s>\frac 32.
}
Note that these conditions are consistent with \eqref{abc-cond} in view of \eqref{g=0-kdv}. 

More generally, for any $n\in\mathbb N$ and $s>3n+\frac 12$, which amounts to $\frac{s+1}{3} > n + \frac 12$, the regularity of the initial and boundary data implies the existence of traces such that, by means of \eqref{fi-kdv-red}, 
\eee{\label{gn=0-kdv}
g^{(n)}(0) \equiv \p_t^n q(0, 0) 
= \p_t^n q(x, t)\big|_{x=t=0}
= -\left(\p_x+\p_x^3\right)^n q(x, t)\big|_{x=t=0}
= -\left(\p_x+\p_x^3\right)^n q(x, 0)\big|_{x=0}
= 0.
}
Similarly, for the positive half-line problem in \eqref{vw-hllr-i}, 
\eee{\label{an=0-kdv}
a^{(n)}(0) \equiv \p_t^n v(0, t)\big|_{t=0} =   \p_t^n v(x, t)\big|_{x=t=0}  = 
-\left(\p_x+\p_x^3\right)^n v(x, t)\big|_{x=t=0} =  -\left(\p_x+\p_x^3\right)^n v(x, 0)\big|_{x=0}  = 0, 
}
which is consistent with \eqref{abc-cond} in view of \eqref{gn=0-kdv}. Analogous equalities consistent with \eqref{ab-cond} also hold for $b^{(n)}(0)$ and~$c^{(n)}(0)$. 

Theorem \ref{kdv-t} on the existence of $a, b, c$ that satisfy \eqref{abc-cond} is established below. Before giving the proof, we highlight the significance of Theorem \ref{kdv-t} via the following nonlinear well-posedness result, which corresponds to Theorem~1.2 of~\cite{bsz2003} and, in our case, follows via a straightforward application of Theorem \ref{kdv-t} along with the results of~\cite{bsz2002,amo2024,amo2026} on the \textit{half-line} problems for the positive and negative-sign KdV equations (see Remark \ref{wp-r}):
\begin{theorem}[KdV on a finite interval]\label{lwp-kdv-t}
For $s \geq 0$ with $s\notin \left(3\mathbb N_0+\frac 12\right) \cup \left(3\mathbb N_0+\frac 32\right)$, the finite interval problem for the KdV equation
\ddd{
&u_t + u_x + u_{xxx} + uu_x = 0, \quad x\in (0, \ell), \ t\in (0, T), 
\\
&u(x, 0) = u_0(x) \in H^s(0, \ell),
\\
&u(0, t) = g_0(t) \in H^{\frac{s+1}{3}}(0, T), 
\quad
u(\ell, t) = h_0(t) \in H^{\frac{s+1}{3}}(0, T), 
\quad
u_x(\ell, t) = h_1(t) \in H^{\frac{s}{3}}(0, T), 
}
with suitable compatibility conditions between the initial and boundary data, is locally well-posed in the sence of Hadamard, namely, for an appropriate lifespan $T^*>0$ that depends on the size of the data, it admits a unique solution in the space $C_t([0, T^*]; H_x^s(0, \ell)) \cap L_t^2((0, T^*); H_x^{s+1}(0, \ell))$  which depends continuously on the data.
\end{theorem}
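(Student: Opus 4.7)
The argument is a standard contraction mapping / Picard iteration in the Bona--Sun--Zhang-type space
\[
X_{T^*} := C_t([0, T^*]; H_x^s(0, \ell)) \cap L_t^2((0, T^*); H_x^{s+1}(0, \ell)),
\]
but set up so that all linear input is inherited, via Theorem \ref{kdv-t}, from the half-line theory of \cite{bsz2002, amo2024, amo2025}, with no direct finite-interval analysis. Specifically, I treat the nonlinearity $uu_x$ as a forcing term and iterate.

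The first step is to upgrade Theorem \ref{kdv-t} to a \emph{forced linear finite-interval estimate}: for the problem \eqref{fi-kdv} with nontrivial $u_0$, $g_0$, $h_0$, $h_1$, $f$, establish
\[
\no{u}_{X_T} \lesssim \no{u_0}_{H^s(0,\ell)} + \no{g_0}_{H^{\frac{s+1}{3}}(0,T)} + \no{h_0}_{H^{\frac{s+1}{3}}(0,T)} + \no{h_1}_{H^{\frac{s}{3}}(0,T)} + \no{f}_{L^1_t((0,T); H^s_x(0,\ell))}.
\]
Proceeding exactly as in Section \ref{kdv-s}, I extend $u_0, f$ to $U_0, F$ on $(-\infty, \ell)$, let $U$ solve the negative half-line problem \eqref{neg-hl-kdv}, and use the half-line theory of \cite{amo2025} applied to $\breve U$ to control $U$ (and in particular its left trace $U(0,\cdot)$) in the relevant norms. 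Theorem \ref{kdv-t} then writes the reduced unknown $q = u - U|_{(0,\ell)}$ as $v|_{(0,\ell)} + w|_{(0,\ell)}$, and the $H^m$ size bound for $a, b, c$ in terms of $g := g_0 - U(0,\cdot)$ (the analogue of \eqref{a-hm-est} for the linear KdV reduction) combined with the half-line Hadamard estimates of \cite{amo2024} for the positive-sign problem in \eqref{vw-hllr-i} and of \cite{amo2025} for \eqref{hlr-kdv-trans} yields the desired control on $q$, hence on $u$, in $X_T$.

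The second step is to invoke the finite-interval bilinear estimate
\[
\no{u v_x + v u_x}_{L^1_t((0,T); H^s_x(0,\ell))} \leq C \, T^{\theta} \no{u}_{X_T} \no{v}_{X_T}, \quad \theta > 0,
\]
which is the finite-interval counterpart of the Kenig--Ponce--Vega / Bona--Sun--Zhang bilinear estimate used on the half-line in \cite{bsz2002, amo2024}. Since the estimate is local in nature, it is obtained by extending $u, v \in X_T$ off $(0,\ell)$ with equivalent norms, applying the known half-line bilinear estimate, and restricting back. Combining the two preceding ingredients, the map $\Psi: u \mapsto \widetilde u$ that sends $u$ to the unique linear solution with forcing $-u u_x$ and the prescribed data is well defined on $X_{T^*}$; for a ball $B_R \subset X_{T^*}$ with $R$ proportional to the data norm and $T^*$ sufficiently small, the forced linear plus bilinear estimates yield $\Psi(B_R) \subset B_R$ with contraction constant $\leq \tfrac 12$, so Banach's theorem gives a unique fixed point. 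Continuous dependence on $(u_0, g_0, h_0, h_1)$ follows by rerunning the same linear-plus-bilinear bounds on the difference of two solutions associated with distinct data sets, noting that $u_1 u_{1,x} - u_2 u_{2,x} = \tfrac 12 \p_x(u_1^2 - u_2^2)$.

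The main obstacle is not the reduction itself (handled by Theorem \ref{kdv-t}), nor the bilinear estimate (inherited from the half-line theory), but the bookkeeping of \emph{compatibility}: Theorem \ref{kdv-t} requires the Dirichlet datum $g$ at $x=0$ to satisfy $g^{(n)}(0)=0$ for all integers $n < \frac{s+1}{3} - \tfrac 12$, cf.\ \eqref{gn=0-kdv}, and one must verify at each iteration level that the modified datum $g = g_0 - U(0,\cdot)$ inherits this vanishing trace condition from the compatibility hypotheses on $u_0, g_0, h_0, h_1$ stated in the theorem. This propagation is precisely what forces the exclusion $s \notin (3\mathbb N_0 + \tfrac 12) \cup (3\mathbb N_0 + \tfrac 32)$: at these critical exponents traces of the relevant order degenerate and the reduction fails to preserve regularity, while at all other $s\geq 0$ the compatibility is routine to check by differentiating \eqref{fi-kdv} in time and using the compatibility of the original data.
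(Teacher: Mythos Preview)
Your proposal is correct and follows exactly the route the paper indicates: the paper does not give a detailed proof of this theorem but states that it ``follows via a straightforward application of Theorem \ref{kdv-t} along with the results of \cite{bsz2002,amo2024,amo2025} on the half-line problems for the positive and negative-sign KdV equations,'' which is precisely the linear-estimate-via-reduction plus contraction-mapping scheme you outline. Your elaboration of the compatibility bookkeeping and the role of the excluded values of $s$ is consistent with the paper's discussion around \eqref{gn=0-kdv}--\eqref{an=0-kdv}, and your derivation of the smoothing component $L_t^2 H_x^{s+1}$ via half-line time estimates matches the remark immediately following the theorem.
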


\begin{remark}
The finite interval smoothing effect manifested by the presence of the space $L_t^2((0, T^*); H_x^{s+1}(0, \ell))$ in Theorem \ref{lwp-kdv-t} can be easily extracted from the linear estimate for the spatial derivative of order $\sigma$ of the solution to the linear problem~\eqref{fi-kdv} in the space $L_x^\infty((0, \ell); H_t^{\frac{s+1-\sigma}{3}}(0, T))$, which in turn follows readily from the time estimates obtained in the half-line works \cite{bsz2002,amo2024,amo2026}. 
\end{remark}

We proceed to the proof of Theorem \ref{kdv-t}, which will be accomplished in several steps by extracting an integral equation for $a$ through the combination of \eqref{abc-cond} with the solution formula obtained for the two problems in \eqref{vw-hllr-i}  via the unified transform.

\subsection{An integral equation for $a(t)$}
\label{iea-kdv-ss}

By means of the unified transform, the half-line problems \eqref{vw-hllr-i} have been shown to admit the solution formulae (see (65) in \cite{amo2024} and (2.13) in \cite{amo2026})
\begin{align}
v(x, t) &= \frac{1}{2\pi} \int_{k \in \mathcal C_+} e^{ikx + i (k^3-k) t} \left(1-3k^2\right) \widetilde a(k^3-k, T) dk,
\label{v-kdv-sol}
\\
w(x, t) &= \frac{1}{2\pi} \int_{k \in \mathcal C_-} e^{ik(\ell-x) - i (k^3-k) t} 
\left[ i \left(k-\nu\right) \widetilde c(-(k^3-k), T) +  \left(k^2-\nu^2\right) \widetilde b(-(k^3-k), T)\right] dk,
\label{w-kdv-sol}
\end{align}
where 
\eee{\label{f-til-kdv}
\nu = \nu(k) := -\frac k2 + \left(1-\frac 34 k^2\right)^{\frac 12},
\quad
\widetilde f(k, T) := \int_0^T e^{-i k t} f(t) dt
}
and for 
\ddd{\label{clr-kdv}
&\mathcal C_R := \left\{\gamma(y) := \frac{1}{\sqrt 3} \sqrt{y^2+1} + i y, \ 0 < y<\infty\right\}, 
\quad
\mathcal C_L := \left\{-\overline{\gamma(y)}, \ 0 < y<\infty\right\}
}
the complex contours $\mathcal C_+$, $\mathcal C_-$ are given by
\eee{\label{cpm-kdv}
\mathcal C_+ 
=
\left\{y: |y| < \frac{1}{\sqrt 3}\right\}
\cup
\mathcal C_L 
\cup
\mathcal C_R,
\quad
\mathcal C_- 
=
\left\{y: |y| >  \frac{1}{\sqrt 3} \right\}
\cup
\mathcal C_L
\cup
\mathcal C_R
}
with the orientation shown in Figure \ref{d-kdv-f}.

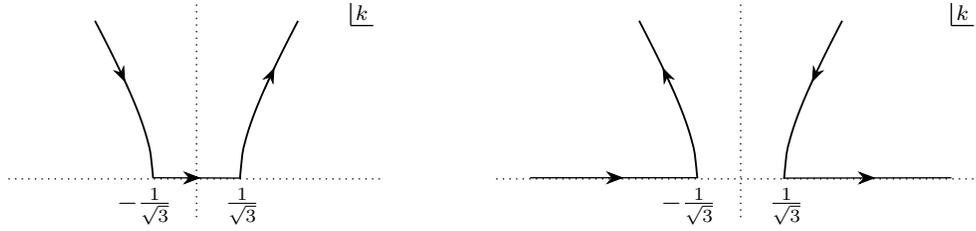
\begin{figure}[ht]
\centering
\vspace{2.5cm}
\begin{tikzpicture}[scale=1]
\pgflowlevelsynccm
\draw[line width=.5pt, black, dotted](-2.5,0.01)--(2.5,0.01);
\draw[line width=.5pt, black, dotted](0,-0.5)--(0, 2.35);
\draw[line width=.5pt, black](2.05,2.35)--(2.05,2.05);
\draw[line width=.5pt, black](2.05,2.05)--(2.35,2.05);
\node[] at (2.14, 2.225) {\fontsize{8}{8} $k$};
\node[] at (-0.75, -0.35) {\fontsize{10}{10} $-\frac{1}{\sqrt 3}$};
\node[] at (0.55, -0.35) {\fontsize{10}{10} $\frac{1}{\sqrt 3}$};
\draw[middlearrow={Stealth[scale=1.3]}, black] (-1,1.41) -- (-53:-1.5);
\draw[middlearrow={Stealth[scale=1.3, reversed]}, black] (1,1.41) -- (53:1.5);
\draw[middlearrow={Stealth[scale=1.3]}, black, line width=.5pt] (-0.59,0.025) -- (2:0.59);
\draw[domain=0.5774:1.35, variable=\x, smooth, black, line width=0.7pt] plot ({\x}, {sqrt(3*\x*\x-1)});
\draw[domain=-1.35:-0.5774, variable=\x, smooth, black, line width=0.7pt] plot ({\x}, {sqrt(3*\x*\x-1)});
\end{tikzpicture}
\hspace*{7cm}
\begin{tikzpicture}[scale=1]
\pgflowlevelsynccm
\draw[line width=.5pt, black, dotted](-3.25,0.01)--(3.25,0.01);
\draw[line width=.5pt, black, dotted](0,-0.5)--(0, 2.35);
\draw[line width=.5pt, black](2.8,2.35)--(2.8,2.05);
\draw[line width=.5pt, black](2.8,2.05)--(3.1,2.05);
\node[] at (2.89, 2.225) {\fontsize{8}{8} $k$};
\node[] at (-0.75, -0.35) {\fontsize{10}{10} $-\frac{1}{\sqrt 3}$};
\node[] at (0.55, -0.35) {\fontsize{10}{10} $\frac{1}{\sqrt 3}$};
\draw[middlearrow={Stealth[scale=1.3, reversed]}, black] (-1,1.41) -- (-53:-1.5);
\draw[middlearrow={Stealth[scale=1.3]}, black] (1,1.41) -- (53:1.5);
\draw[middlearrow={Stealth[scale=1.3, reversed]}, black, line width=.5pt] (-0.565,0.025) -- (-0.5:-2.8);
\draw[middlearrow={Stealth[scale=1.3]}, black, line width=.5pt] (0.565,0.02) -- (0.5:2.8);
\draw[domain=0.5774:1.35, variable=\x, smooth, black, line width=0.7pt] plot ({\x}, {sqrt(3*\x*\x-1)});
\draw[domain=-1.35:-0.5774, variable=\x, smooth, black, line width=0.7pt] plot ({\x}, {sqrt(3*\x*\x-1)});
\end{tikzpicture}
\vspace{7mm}
\caption{The oriented contours $\mathcal C_+$ (left) and $\mathcal C_-$ (right) for the solution formulae \eqref{v-kdv-sol} and~\eqref{w-kdv-sol}, as defined by \eqref{cpm-kdv}.}
\label{d-kdv-f}
\end{figure}

Combining \eqref{abc-cond} with the solution formula \eqref{w-kdv-sol} and the definitions of $\widetilde b, \widetilde c$ according to \eqref{f-til-kdv}, we have
\eee{
a(t) 
=
g(t) - \frac{1}{2\pi} \int_{k \in \mathcal C_-} e^{ik \ell - i (k^3-k) t} 
\left[ i \left(k-\nu\right) \int_{z=0}^T e^{i (k^3-k) z} c(z) dz +  \left(k^2-\nu^2\right) \int_{z=0}^T e^{i (k^3-k) z} b(z) dz \right] dk.
}
Hence, using \eqref{abc-cond} once again to substitute for $b, c$ in terms of $v$ and then employing formula \eqref{v-kdv-sol}, we obtain the integral equation
\aaa{
a(t) 
=
g(t) 
&- \frac{1}{(2\pi)^2} \int_{k \in \mathcal C_-} e^{ik \ell - i (k^3-k) t} 
 \left(k-\nu\right) \int_{z=0}^T e^{i (k^3-k) z} Q_1(z) dz dk
\nn\\
&+ \frac{1}{(2\pi)^2} \int_{k \in \mathcal C_-} e^{ik \ell - i (k^3-k) t} \left(k^2-\nu^2\right) \int_{z=0}^T e^{i (k^3-k) z} Q_2(z)  dk
\label{a-eq-kdv0}
}
where 
\eee{\label{qj-kdv-def}
Q_j(z) 
:= 
\int_{\lambda \in \mathcal C_+} e^{i\lambda\ell + i (\lambda^3-\lambda) z} \lambda^{2-j} \left(1-3\lambda^2\right) \int_{r=0}^T e^{-i(\lambda^3-\lambda)r} a(r) dr d\lambda,
\quad j = 1, 2. 
}

The integral equation  \eqref{a-eq-kdv0} can be written in the form
\eee{\label{a-eq-kdv}
a(t) = \Phi_g[a](t)
}
where, recalling the definition of the contour $\mathcal C_-$, the right side has been rearranged from the one in \eqref{a-eq-kdv0} to 
\aaa{
\Phi_g[a](t)
:=
g(t) &- \frac{1}{(2\pi)^2} \int_{k \in (-\infty, -1) \cup (1, \infty)} e^{ik \ell - i (k^3-k) t} 
 \left(k-\nu\right) \int_{z=0}^T e^{i (k^3-k) z}  Q_1(z) dz dk
\label{ag1}
\\
&
- \frac{1}{(2\pi)^2} \int_{k \in (-1, -\frac{1}{\sqrt 3}) \cup (\frac{1}{\sqrt 3}, 1)} e^{ik \ell - i (k^3-k) t} 
 \left(k-\nu\right) \int_{z=0}^T e^{i (k^3-k) z}  Q_1(z) dz dk
\label{ag2}
\\
&
- \frac{1}{(2\pi)^2} \int_{k\in \mathcal C_L\cup\mathcal C_R} e^{ik \ell - i (k^3-k) t} 
 \left(k-\nu\right) \int_{z=0}^T e^{i (k^3-k) z} Q_1(z) dz dk
\label{ag3}
\\
&
+ \frac{1}{(2\pi)^2} \int_{k \in (-\infty, -1) \cup (1, \infty)} e^{ik \ell - i (k^3-k) t} 
 \left(k^2-\nu^2\right) \int_{z=0}^T e^{i (k^3-k) z}  Q_2(z) dz dk
\label{ag4}
\\
&
+ \frac{1}{(2\pi)^2} \int_{k \in (-1, -\frac{1}{\sqrt 3}) \cup (\frac{1}{\sqrt 3}, 1)} e^{ik \ell - i (k^3-k) t} 
 \left(k^2-\nu^2\right) \int_{z=0}^T e^{i (k^3-k) z}  Q_2(z) dz dk
\label{ag5}
\\
&
+ 
\frac{1}{(2\pi)^2} \int_{k\in \mathcal C_L\cup\mathcal C_R} e^{ik \ell - i (k^3-k) t} \left(k^2-\nu^2\right) \int_{z=0}^T e^{i (k^3-k) z} Q_2(z) dz  dk.
\label{ag6}
}
Our goal next is to show that \eqref{a-eq-kdv} has a solution by proving that the map $a \mapsto \Phi_g[a]$ is a contraction in Sobolev spaces $H^m(0, T)$, $m\geq 0$.

\begin{remark}
The approach that led to the integral equation \eqref{a-int-eq} for the heat equation is not effective in the case of the linearized KdV equation. This is because, in the case of the heat equation, the reduction of the finite interval problem~\eqref{q-fi-ibvp} was accomplished through the half-line problems \eqref{vw-hl-ibvp-i}, which only involve the heat equation. As a consequence, the associated solution formulae \eqref{heat-utm-t} and \eqref{w-utm-t} only involve the complex contour $\p \mathcal D$ of Figure~\ref{d-heat-f}, along which the associated spatial exponentials decay.  
In the case of the positive-sign linearized KdV equation, however, the reduction of the finite interval problem \eqref{fi-kdv-red} includes the half-line problem \eqref{hlr-kdv-trans} for  the \textbf{negative-sign} linearized KdV equation. As the corresponding solution formula obtained through \eqref{w-kdv-sol} involves the complex contour $\mathcal C_-$ of Figure \ref{d-kdv-f}, with infinite portions along the real $k$-axis along which the relevant spatial exponential is \textbf{purely oscillatory} instead of decaying, a different approach is instead needed, leading to the integral equation~\eqref{a-eq-kdv}.
It is worth noting that the heat equation could also be analyzed via the approach used below for the linearized KdV equation.
\end{remark}

\subsection{Existence in $L^2(0, T)$}

We begin by considering \eqref{a-eq-kdv} in $H^0(0, T) \equiv L^2(0, T)$. We estimate the double integrals in \eqref{ag1} and  \eqref{ag4} collectively by considering 
\eee{\label{ag14}
P_j(t) := - \frac{1}{(2\pi)^2} \int_{k \in (-\infty, -1) \cup (1, \infty)} e^{ik \ell - i (k^3-k) t} 
 \left(k^j-\nu^j\right) \int_{z=0}^T e^{i (k^3-k) z}  Q_j(z) dz dk, \quad j = 1,2.
}
Making the change of variable $\tau = -\left(k^3-k\right)$, which maps $(-\infty, -1)$ to $(0, \infty)$ and $(1, \infty)$ to $(-\infty, 0)$, we obtain
\aaa{
P_j(t)
&=
- \frac{1}{(2\pi)^2} \int_{\infty}^{-\infty} e^{ik(\tau) \ell + i \tau t} 
 \left[k(\tau)^j-\nu(k(\tau))^j\right] \int_{z=0}^T e^{-i \tau z}  Q_j(z) dz \, \frac{d\tau}{1-3k(\tau)^2}
\nn\\
&=
\frac{1}{(2\pi)^2} \int_{\mathbb R} e^{i \tau t} \cdot e^{ik(\tau) \ell} 
\, \frac{k(\tau)^j-\nu(k(\tau))^j}{1-3k(\tau)^2} \int_{z=0}^T e^{-i \tau z}  Q_j(z) dz \, d\tau
}
where $k(\tau)$ satisfies the cubic equation $\tau = -\left(k^3-k\right)$ and is real when $\tau \in \mathbb R$  (since for $\tau \in \mathbb R$ the cubic equation has real coefficients, it always possesses at least one real root).
Then, noting that \eqref{ag14} makes sense for all $t\in\mathbb R$ due to the fact that the $t$-exponential is purely oscillatory, and subsequently using Plancherel's theorem, we have
\eee{
\no{P_j}_{L^2(0, T)}^2
\leq
\no{P_j}_{L_t^2(\mathbb R)}^2
=
\frac{1}{2\pi} \no{\mathcal F_t\left\{P_j\right\}}_{L_\tau^2(\mathbb R)}^2
=
\frac{1}{2\pi} 
\no{
\frac{1}{2\pi} e^{ik(\tau) \ell} 
\, \frac{k(\tau)^j-\nu(k(\tau))^j}{1-3k(\tau)^2} \int_{z=0}^T e^{-i \tau z}  Q_j(z) dz}_{L_\tau^2(\mathbb R)}^2.
\label{pjl2-0}
}
Since $\left|e^{ik(\tau)\ell}\right|=1$ for $\tau \in \mathbb R$ due to the  choice of $k(\tau)$, the above simplifies to
\eee{\label{p334}
\no{P_j}_{L^2(0, T)}^2
\leq
\frac{1}{(2\pi)^2} 
\int_{\tau\in\mathbb R}
\left|\frac{k(\tau)^j-\nu(k(\tau))^j}{1-3k(\tau)^2} \int_{z=0}^T e^{-i \tau z}  Q_j(z) dz\right|^2 d\tau.
}
Now, for $|k|\geq 1$, we have $\left|1-3k^2\right| \geq 3|k|^2 - 1 \geq 2|k|^2$. In addition,  recalling the definition of $\nu$ in \eqref{f-til-kdv}, 
\eee{\label{n-k3}
\left|k-\nu(k)\right|
\leq
|k| + |\nu(k)|
\leq
\frac{3|k|}{2} + \sqrt{1 + \frac{3|k|^2}{4}}
\leq
1 + 3|k|
}
and, consequently,
\eee{
\left|k^2-\nu(k)^2\right| 
\leq \left(|k|+|\nu(k)|\right)^2
\leq
\left(1+3|k|\right)^2.
\label{n-k4}
}
Hence, we have the bounds
\ddd{\label{pjl2-1}
&\left|\frac{k(\tau)-\nu(k(\tau))}{1-3k(\tau)^2}\right|
\leq
\frac{1+3|k|}{2|k|^2} \leq 2,  
\quad
\left|\frac{k(\tau)^2-\nu(k(\tau))^2}{1-3k(\tau)^2}\right|
\leq
\frac{\left(1+3|k|\right)^2}{2|k|^2} \leq 8, \quad |k|\geq 1,
}
where the ultimate inequality in each bound holds precisely because $|k|\geq 1$. In turn, \eqref{p334} yields
\eee{
\no{P_j}_{L^2(0, T)}^2
\leq
\frac{16}{\pi^2} 
\int_{\tau\in\mathbb R}
\left|\int_{z=0}^T e^{-i \tau z}  Q_j(z) dz\right|^2 d\tau
=
\frac{16}{\pi^2} \no{\mathcal F_z\left\{\chi_{[0, T]} Q_j\right\}}_{L_\tau^2(\mathbb R)}^2,
}
where the last equality follows from the fact that $Q_j(z)$ makes sense for all $z\in\mathbb R$ since $\text{Im}(\lambda^3-\lambda) = 0$ for $\lambda \in \mathcal C_+$. Thus, employing Plancherel's theorem once again and then using the definition \eqref{qj-kdv-def} of $Q_j$, we find
\aaa{
\no{P_j}_{L^2(0, T)}^2
&\leq
\frac{32}{\pi} \no{\chi_{[0, T]} Q_j}_{L_z^2(0, T)}^2
=
\frac{32}{\pi} \int_0^T \left|\int_{\lambda \in \mathcal C_+} e^{i\lambda\ell + i (\lambda^3-\lambda) z} \lambda^{2-j}  \left(1-3\lambda^2\right) \int_{r=0}^T e^{-i(\lambda^3-\lambda)r} a(r) dr d\lambda\right|^2 dz.
\nn
}
Introducing the notation 
\eee{\label{j12-kdv-def}
J_j(\sigma) := \int_{\lambda \in \mathcal C_+} e^{i\lambda\ell + i (\lambda^3-\lambda) \sigma} \lambda^{2-j} \left(1-3\lambda^2\right) d\lambda,
\quad j=1, 2,
}
and using the triangle inequality, we further have
\eee{\label{p336}
\no{P_j}_{L^2(0, T)}^2
\leq
\frac{32}{\pi} \int_0^T \left|
\int_{r=0}^T J_j(z-r)  a(r) dr \right|^2 dz
\leq
\frac{32}{\pi} \int_0^T \left(
\int_{r=0}^T \left|J_j(z-r)\right|  \left|a(r)\right| dr \right)^2 dz.
}

Let us now estimate $J_j(\sigma)$. Parametrizing along $\mathcal C_+$ according to \eqref{clr-kdv} and taking into account the contour's orientation, we write
\aaa{
J_1(\sigma) 
&= 
\int_\infty^0 e^{-i\overline{\gamma(y)} \ell - i \overline{(\gamma(y)^3 - \gamma(y))} \sigma} \, \overline{\gamma(y)} \, \left(1-3 \overline{\gamma(y)^2}\right) \overline{\gamma'(y)} dy
+
 \int_{-\frac{1}{\sqrt 3}}^{\frac{1}{\sqrt 3}} e^{iy\ell + i (y^3-y) \sigma} y \left(1-3y^2\right) dy
\nn\\
&\quad
+
\int_0^\infty e^{i \gamma(y) \ell + i ((\gamma(y))^3-\gamma(y)) \sigma} \gamma(y) \left(1-3\gamma(y)^2\right) \gamma'(y) dy.
\nn
}
Hence, noting that $\text{Im}(\gamma(y)^3-\gamma(y)) = 0$ for $y\geq 0$ (since $\text{Im}(k^3-k) = 0$ along $\mathcal C_R \cup \mathcal C_L$) and $\gamma'(y) = \frac{y}{\sqrt{3(y^2+1)}} + i$, 
\aaa{
\left| J_1(\sigma) \right|
&\leq
2 \int_0^\infty e^{-y \ell} \sqrt{\frac{4y^2+1}{3}} \left(1+3 \cdot \frac{4y^2+1}{3}\right) \sqrt{\frac{y^2}{3(y^2+1)} + 1} \, dy
+
2\int_{0}^{\frac{1}{\sqrt 3}} y \left(1-3y^2\right) dy
\nn\\
&\leq 
\frac{4}{\sqrt 3}  \int_0^\infty e^{-y \ell} \sqrt{4y^2+1} \left(2y^2+1\right) \sqrt{\frac 13 + 1}  \, dy
+
2\int_{0}^{\frac{1}{\sqrt 3}} y \left(1-3y^2\right) dy
\nn\\
&\leq
\frac{8}{3}  \int_0^\infty e^{-y \ell} \left(2y+1\right) \left(2y^2+1\right)  dy
+
2\int_{0}^{\frac{1}{\sqrt 3}} y \left(1-3y^2\right) dy
=
\frac{8\left(\ell^3 + 2\ell^2 + 4\ell + 24\right)}{3\ell^4}
+
\frac 16,
\label{j1-bound-kdv}
}
which is a uniform bound for $J_1$ with respect to $\sigma \in \mathbb R$. Analogously, $J_2$ is shown to admit the uniform bound
\eee{\label{j2-bound-kdv}
\left| J_2(\sigma) \right|
\leq
\frac{8\left(\ell^2+4\right)}{\sqrt 3 \, \ell^3} + \frac{4}{3\sqrt 3}.
}

Combining \eqref{p336} with the bounds \eqref{j1-bound-kdv} and \eqref{j2-bound-kdv}, we obtain
\eee{
\no{P_j}_{L^2(0, T)}
\leq
\frac{4 \sqrt 2}{\sqrt \pi} \, c_j 
\left(\int_0^T \left(
\int_{r=0}^T  \left|a(r)\right| dr \right)^2 dz\right)^{\frac 12}
=
\frac{4 \sqrt 2}{\sqrt \pi} \, c_j 
\sqrt T \no{a}_{L^1(0, T)}
\leq
\frac{4 \sqrt 2}{\sqrt \pi} \, c_j T \no{a}_{L^2(0, T)}
\label{ag14-bound}
}
where
\eee{\label{cj-kdv-def}
c_1 = \frac{8\left(\ell^3 + 2\ell^2 + 4\ell + 24\right)}{3\ell^4} + \frac 16,
\quad
c_2 = \frac{8\left(\ell^2+4\right)}{\sqrt 3 \, \ell^3} + \frac{4}{3\sqrt 3}.
}

We proceed to the terms \eqref{ag2} and \eqref{ag5}, which will be estimated collectively by considering 
\eee{\label{ag25}
R_j(t) := \frac{1}{(2\pi)^2} \int_{k \in (-1, -\frac{1}{\sqrt 3}) \cup (\frac{1}{\sqrt 3}, 1)} e^{ik \ell - i (k^3-k) t} 
 \left(k^j-\nu^j\right) \int_{z=0}^T e^{i (k^3-k) z}  Q_j(z) dz dk, \quad j = 1,2.
}
As the range of integration is finite, we employ \eqref{qj-kdv-def}, \eqref{n-k3}, \eqref{n-k4} and \eqref{j12-kdv-def} to infer 
\aaa{
\no{R_j}_{L^2(0, T)}^2
&\leq
\frac{1}{(2\pi)^4} 
\int_{t=0}^T \left(\int_{k \in (-1, -\frac{1}{\sqrt 3}) \cup (\frac{1}{\sqrt 3}, 1)}  
 \left|k^j-\nu^j\right| \int_{z=0}^T \left|Q_j(z)\right| dz dk\right)^2 dt
\nn\\
&\leq
\frac{1}{(2\pi)^4} 
\int_{t=0}^T \Bigg(\int_{k \in (-1, -\frac{1}{\sqrt 3}) \cup (\frac{1}{\sqrt 3}, 1)}  
\left(1+3|k|\right)^j \int_{z=0}^T \Bigg|
\int_{\lambda \in \mathcal C_+} e^{i\lambda\ell + i (\lambda^3-\lambda) z} \lambda^{2-j} \left(1-3\lambda^2\right) 
\nn\\
&\hskip 8.5cm \cdot \int_{r=0}^T e^{-i(\lambda^3-\lambda)r} a(r) dr d\lambda
\Bigg| dz dk\Bigg)^2 dt
\nn\\
&\leq
\frac{1}{(2\pi)^4} 
\int_{t=0}^T \left(2 \int_{k \in (\frac{1}{\sqrt 3}, 1)} 4^j dk\right) ^2 
\left(
 \int_{z=0}^T \left|
 \int_{r=0}^T a(r) J_j(z-r) dr
\right| dz\right)^2 dt
\nn\\
&\leq
\frac{2^{4j+2} \left(1-\frac{1}{\sqrt 3}\right)^2}{(2\pi)^4} 
\int_{t=0}^T
\left(
 \int_{z=0}^T 
 \int_{r=0}^T  \left|J_j(z-r)\right| \left|a(r)\right| dr dz\right)^2 dt.
 }
Hence, in view of the uniform bounds \eqref{j1-bound-kdv} and \eqref{j2-bound-kdv} for $J_j(\sigma)$ with $\sigma \in \mathbb R$, 
\aaa{
\no{R_j}_{L^2(0, T)}
&\leq
\frac{2^{2j-1} \left(\sqrt 3-1\right)}{\pi^2 \sqrt 3} 
\left(
\int_{t=0}^T
\left(
 \int_{z=0}^T 
 \int_{r=0}^T  c_j \left|a(r)\right| dr dz\right)^2 dt
\right)^{\frac 12}
\nn\\
&=
\frac{2^{2j-1} \left(\sqrt 3-1\right) c_j}{\pi^2 \sqrt 3} \, 
T^{\frac 32}  \no{a}_{L^1(0, T)}
\leq
\frac{2^{2j-1} \left(\sqrt 3-1\right) c_j}{\pi^2 \sqrt 3} \, 
T^2  \no{a}_{L^2(0, T)}
\label{ag25-bound}
}
with the constants $c_j$ given by \eqref{cj-kdv-def}. 

Lastly, we estimate the terms \eqref{ag3} and \eqref{ag6} by considering 
\eee{\label{ag36}
S_j(t) := \frac{1}{(2\pi)^2} \int_{k\in \mathcal C_L\cup\mathcal C_R} e^{ik \ell - i (k^3-k) t} 
 \left(k^j-\nu^j\right) \int_{z=0}^T e^{i (k^3-k) z} Q_j(z) dz dk, \quad j=1,2. 
}
For this term, we exploit the fact that along the contours $\mathcal C_L$ and $\mathcal C_R$ we have exponential decay from $e^{ik\ell}$ while the time exponentials are unitary since $\text{Im}(k^3-k) = 0$. Specifically, substituting for $Q_j$ via \eqref{qj-kdv-def} and rearranging the order of integration, we are able to express $S_j(t)$ in the form 
\eee{
S_j(t) 
=
\frac{1}{(2\pi)^2} \int_{z=0}^T M_j(t-z)  \int_{r=0}^T J_j(z-r)  a(r) dr  dz 
}
where  $J_j$ is defined by  \eqref{j12-kdv-def} and 
\eee{\label{mj-kdv-def}
M_j(\sigma) 
:= 
\int_{k\in \mathcal C_L\cup\mathcal C_R} e^{ik \ell - i (k^3-k)\sigma} \left(k^j-\nu^j\right) dk, \quad j=1,2.
} 
Then, using the bounds \eqref{j1-bound-kdv} and \eqref{j2-bound-kdv}, we find
\aaa{
\no{S_j}_{L^2(0, T)}
&\leq
\frac{1}{(2\pi)^2} 
\no{\int_{z=0}^T \left|M_j(t-z)\right|  \int_{r=0}^T \left|J_j(z-r)\right|  \left|a(r)\right| dr  dz}_{L_t^2(0, T)}
\nn\\
&\leq
\frac{c_j}{(2\pi)^2} 
\no{\int_{z=0}^T \left|M_j(t-z)\right|  dz}_{L_t^2(0, T)} \sqrt T \no{a}_{L^2(0, T)}
\label{ag36-bound-0}
}
with $c_j$ given by \eqref{cj-kdv-def}. Thus, it suffices to bound $M_j(\sigma)$ for $\sigma \in \mathbb R$. 

Parametrizing along $\mathcal C_L \cup \mathcal C_R$ according to \eqref{clr-kdv} and taking into account  the contour orientation, we have 
\aaa{
M_j(\sigma) 
&=
- \int_0^\infty e^{-i\overline{\gamma(y)} \ell + i \overline{(\gamma(y)^3 - \gamma(y))} \sigma} \left(\big(-\overline{\gamma(y)}\big)^j - \nu(-\overline{\gamma(y)})^j\right)  \overline{\gamma'(y)} dy
\nn\\
&\quad
+
\int_\infty^0 e^{i \gamma(y) \ell - i ((\gamma(y))^3-\gamma(y)) \sigma}  
\left(\gamma(y)^j - \nu(\gamma(y))^j\right) \gamma'(y) dy.
}
Hence, noting that $\text{Im}(\gamma(y)^3-\gamma(y)) = 0$ for $y\geq 0$ (since $\text{Im}(k^3-k) = 0$ along $\mathcal C_R \cup \mathcal C_L$), we find
\aaa{
\left|M_j(\sigma)\right|
&\leq
\int_0^\infty e^{-\ell y} \left|\big(-\overline{\gamma(y)}\big)^j - \nu(-\overline{\gamma(y)})^j\right|  \left|\gamma'(y)\right| dy
+
\int_0^\infty e^{-\ell y} 
\left|\gamma(y)^j - \nu(\gamma(y))^j\right| \left|\gamma'(y)\right| dy
\nn\\
&\leq
\int_0^\infty e^{-\ell y} \left(\left|\gamma(y)\right|^j + \big|\nu(-\overline{\gamma(y)})\big|^j\right) \left|\gamma'(y)\right| dy
+
\int_0^\infty e^{-\ell y} 
\left(\left|\gamma(y)\right|^j + \left|\nu(\gamma(y))\right|^j\right) \left|\gamma'(y)\right| dy.
}
and then by \eqref{n-k3}, \eqref{n-k4} and the fact that $\gamma'(y) = \frac{y}{\sqrt{3(y^2+1)}} + i$ we obtain
\aaa{
\left|M_j(\sigma)\right|
&\leq
2 \int_0^\infty e^{-\ell y} \left(1+3\left|\gamma(y)\right|\right)^j  \sqrt{\frac{y^2}{3(y^2+1)} + 1} \, dy
\nn\\
&\leq
2 \int_0^\infty e^{-\ell y} \left(1+3 \sqrt{\frac{4y^2+1}{3}}\right)^j  \sqrt{\frac 13 + 1} \, dy
\nn\\
&\leq
\frac{4}{\sqrt 3} \int_0^\infty e^{-\ell y} \cdot 2\left(1+3 \left(4y^2+1\right)\right)  dy
=
\frac{2^5 \left(\ell^2+6\right)}{\sqrt 3 \, \ell^3}.  
}
In turn, \eqref{ag36-bound-0} yields 
\eee{\label{ag36-bound}
\no{S_j}_{L^2(0, T)}
\leq
\frac{c_j}{(2\pi)^2} 
\no{\int_{z=0}^T\frac{2^5 \left(\ell^2+6\right)}{\sqrt 3 \, \ell^3} dz}_{L_t^2(0, T)} \sqrt T \no{a}_{L^2(0, T)}
=
\frac{2^3 c_j \left(\ell^2+6\right)}{\pi^2 \sqrt 3 \, \ell^3} \, T^2 \no{a}_{L^2(0, T)}.
}

Overall, combining \eqref{ag14-bound}, \eqref{ag25-bound}, \eqref{ag36-bound} with the definition \eqref{ag1} of $\Phi_g[a]$, we infer the estimate
\eee{
\no{\Phi_g[a]}_{L^2(0, T)}
\leq
\no{g}_{L^2(0, T)}
+
\frac{2^{\frac 52} \left(c_1 + c_2\right)}{\sqrt \pi} 
\left(  
1 
+
\frac{\sqrt 2 \left(\sqrt 3-1\right)}{\pi^{\frac 32} \sqrt 3} 
T
+
\frac{\sqrt 2 \left(\ell^2+6\right)}{\pi^{\frac 32} \sqrt 3 \, \ell^3}   T
\right) T \no{a}_{L^2(0, T)}
\label{phiga-l2}
}
with $c_1, c_2$ given by \eqref{cj-kdv-def}.
Moreover, thanks to the linearity of $\Phi_g[a]$ in $a$, a similar estimation yields 
\eee{
\no{\Phi_g[a_1] - \Phi_g[a_2]}_{L^2(0, T)}
\leq
\frac{2^{\frac 52} \left(c_1 + c_2\right)}{\sqrt \pi} 
\left(  
1 
+
\frac{\sqrt 2 \left(\sqrt 3-1\right)}{\pi^{\frac 32} \sqrt 3} 
T
+
\frac{\sqrt 2 \left(\ell^2+6\right)}{\pi^{\frac 32} \sqrt 3 \, \ell^3}   T
\right) T \no{a_1 - a_2}_{L^2(0, T)}
\label{phiga-l2-contr}
}
for any $a_1, a_2 \in L^2(0, T)$. 

With the two above estimates at hand, letting $B(0, \rho) \subset L^2(0, T)$ denote the closed ball of radius $\rho = 2\no{g}_{L^2(0, T)}$ centered at zero, for any $a \in B(0, \rho)$ we have
$$
\no{\Phi_g[a]}_{L^2(0, T)}  
\leq 
\frac \rho 2 + \frac{2^{\frac 52} \left(c_1 + c_2\right)}{\sqrt \pi} 
\left(  
1 
+
\frac{\sqrt 2 \left(\sqrt 3-1\right)}{\pi^{\frac 32} \sqrt 3} 
T
+
\frac{\sqrt 2 \left(\ell^2+6\right)}{\pi^{\frac 32} \sqrt 3 \, \ell^3}   T
\right) T
\rho 
$$
so if $T>0$ is such that, for $c_1, c_2$ given by \eqref{cj-kdv-def}, 
\eee{\label{T-L2-kdv}
\frac{2^{\frac 52} \left(c_1 + c_2\right)}{\sqrt \pi} 
\left(  
1 
+
\frac{\sqrt 2 \left(\sqrt 3-1\right)}{\pi^{\frac 32} \sqrt 3} 
T
+
\frac{\sqrt 2 \left(\ell^2+6\right)}{\pi^{\frac 32} \sqrt 3 \, \ell^3}   T
\right) T
\leq \frac 12
}
then $\Phi_g[a] \in B(0, \rho)$. 
Furthermore, for such a $T>0$, estimate \eqref{phiga-l2-contr} implies that the map $a \mapsto \Phi_g[a]$ is a contraction on $B(0, \rho)$.
Thus, by Banach's fixed point theorem, $\Phi_g[a]$ has a unique fixed point in $B(0, \rho)$, which amounts to a unique solution of the integral equation~\eqref{a-eq-kdv} for $a$ in  $B(0, \rho)$.  
Furthermore, having proved the existence of such a solution as a fixed point of $\Phi_g[a]$, we can return to \eqref{phiga-l2} and obtain the improved size estimate 
\begin{equation}\label{a-l2-est-kdv}
\no{a}_{L^2(0, T)} 
\leq
\frac{1}{1-\frac{2^{\frac 52} \left(c_1 + c_2\right)}{\sqrt \pi} 
\left(  
1 
+
\frac{\sqrt 2 \left(\sqrt 3-1\right)}{\pi^{\frac 32} \sqrt 3} 
T
+
\frac{\sqrt 2 \left(\ell^2+6\right)}{\pi^{\frac 32} \sqrt 3 \, \ell^3}   T
\right) T} \no{g}_{L^2(0, T)}.
\end{equation}
In summary, if $T>0$ satisfies \eqref{T-L2-kdv}, then there exists a unique $a \in B(0, 2\no{g}_{L^2(0, T)})\subset L^2(0, T)$ that solves the integral equation \eqref{a-eq-kdv}.

\subsection{Existence in $H^m(0, T)$ for any $m\geq 0$}

Let us denote the solutions to problems \eqref{vw-hllr-i}  by $v[a]$ and $w[b, c]$. Differentiating both of these problems with respect to $t$ and setting $v_1[a] := \p_t \left(v[a]\right)$ and $w_1[b, c] := \p_t \left(w[b, c]\right)$, we have
\begin{equation}\label{p-kdv}
\begin{aligned}
&\p_t \left(v_1[a]\right) + \p_x \left(v_1[a]\right) + \p_x^3 \left(v_1[a]\right)= 0, \quad x \in (0, \infty), \ t \in (0, T),
\\
&v_1[a](x, 0) = 0, \quad x \in (0, \infty),
\\
&v_1[a](0, t) = a'(t), \quad t \in (0, T),
\end{aligned}
\end{equation}
and  
\begin{equation}\label{q-kdv}
\begin{aligned}
&\p_t \left(w_1[b, c]\right) + \p_x \left(w_1[b, c]\right) + \p_x^3 \left(w_1[b, c]\right)= 0, \quad x \in (-\infty, \ell), \ t \in (0, T),
\\
&w_1[b, c](x, 0) = 0, \quad x \in (-\infty, \ell),
\\
&w_1[b, c](\ell, t) = b'(t), \quad \p_x \left(w_1[b, c]\right)(\ell, t) = c'(t), \quad t \in (0, T).
\end{aligned}
\end{equation}
Note that the initial datum in problem \eqref{p-kdv}  is zero due to the fact that, by the first of the problems in \eqref{vw-hllr-i}, $v_1[a] \equiv \p_t \left(v[a]\right) = -\p_x \left(v[a]\right) -\p_x^3 \left(v[a]\right)$ and the right side vanishes at $t=0$ since $v[a](x, 0) = 0$. Through the same reasoning, problem problem \eqref{q-kdv} implies that $w_1[b, c](x, 0) = 0$. 

Combining \eqref{p-kdv} and \eqref{q-kdv} with induction, it follows that for any $n\in\mathbb N_0$ the derivatives
\eee{\label{vnwn-def}
v_n[a](x, t) := \p_t^n \left(v[a](x, t)\right), \quad w_n[b, c](x, t) := \p_t^n \left(w[b, c](x, t)\right)
}
satisfy the problems
\begin{equation}\label{vn-kdv}
\begin{aligned}
&\p_t \left(v_n[a]\right) + \p_x \left(v_n[a]\right) + \p_x^3 \left(v_n[a]\right) = 0, \quad x \in (0, \infty), \ t \in (0, T),
\\
&v_n[a](x, 0) = 0, \quad x \in (0, \infty),
\\
&v_n[a](0, t) = a^{(n)}(t), \quad t \in (0, T),
\end{aligned}
\end{equation}
and  
\begin{equation}\label{wn-kdv}
\begin{aligned}
&\p_t \left(w_n[b, c]\right) + \p_x \left(w_n[b, c]\right) + \p_x^3 \left(w_n[b, c]\right) = 0, \quad x \in (-\infty, \ell), \ t \in (0, T),
\\
&w_n[b, c](x, 0) = 0, \quad x \in (-\infty, \ell),
\\
&w_n[b, c](\ell, t) = b^{(n)}(t), \quad \p_x \left(w_n[b, c]\right)(\ell, t) = c^{(n)}(t), \quad t \in (0, T).
\end{aligned}
\end{equation}
The problems \eqref{vn-kdv} and \eqref{wn-kdv} are identical to the two problems in \eqref{vw-hllr-i} except for the fact that the various boundary data are replaced by their $n$th derivatives. 
Therefore, by uniqueness of solution to these two linear problems,
\eee{\label{vwn}
v_n[a](x, t) = v[a^{(n)}](x, t), \quad
w_n[b, c](x, t) = w[b^{(n)}, c^{(n)}](x, t),
\quad
n \in \mathbb N_0.
}

Furthermore, differentiating the superposition \eqref{sup-kdv-i} $n$ times in $t$ we obtain
\begin{equation}\label{sup-kdv-n}
q_n[g](x, t) = v_n[a](x, t)\big|_{x\in(0, \ell)} + w_n[b, c](x, t)\big|_{x\in(0, \ell)}
\end{equation}
where, via the same reasoning as above, the function $q_n[g](x, t) := \p_t^n\left(q[g](x, t)\right)$ satisfies the problem 
\begin{equation}\label{fi-kdv-red-n}
\begin{aligned}
&\p_t \left(q_n[g]\right) + \p_x \left(q_n[g]\right) + \p_x^3 \left(q_n[g]\right) = 0, \quad x \in (0, \ell), \ t \in (0, T),
\\
&q_n[g](x, 0) = 0, \quad x \in (0, \ell),
\\
&q_n[g](0, t) = g^{(n)}(t), \quad q_n[g](\ell, t) = 0, \quad \p_x \left(q_n[g]\right)(\ell, t) = 0, \quad t \in (0, T),
\end{aligned}
\end{equation}
and hence
\eee{\label{qn}
q_n[g](x, t) = q[g^{(n)}](x, t), 
\quad
n \in \mathbb N_0.
} 
Combining \eqref{vwn} and \eqref{qn} with \eqref{abc-cond} (or, equivalently, proceeding directly via \eqref{sup-kdv-n}), we obtain the conditions
\ddd{\label{abc-cond-n}
a^{(n)}(t) &= g^{(n)}(t) - w[b^{(n)}, c^{(n)}](0, t),
\\
b^{(n)}(t) &= -v[a^{(n)}](\ell, t),
\quad
c^{(n)}(t) = -v_x[a^{(n)}](\ell, t),
}
which can be combined into the integral equation
\eee{\label{ap-eq-kdv}
a^{(n)}(t) 
=
g^{(n)}(t) + w\big[v[a^{(n)}]|_{x=\ell}, v_x[a^{(n)}]|_{x=\ell}\big](0, t).
}

Noting that \eqref{ag1}-\eqref{ag6} together with the formulae \eqref{v-kdv-sol} and \eqref{w-kdv-sol} imply
\eee{\label{phin-def}
\Phi_g[a](t) = g(t) + w\big[v[a]|_{x=\ell}, v_x[a]|_{x=\ell}\big](0, t),
}
the integral equation \eqref{ap-eq-kdv} reads
\eee{
a^{(n)}(t) = \Phi_{g^{(n)}}[a^{(n)}](t), \quad n\in\mathbb N_0,
}
which is the integral equation \eqref{a-eq-kdv} with $a^{(n)}$ and $g^{(n)}$ in place of $a$ and $g$, respectively.
Moreover, taking the $n$th (time) derivative of \eqref{phin-def} and then recalling the notation \eqref{vnwn-def} and the equalities \eqref{vwn}, we have
\aaa{
\Phi_g[a]^{(n)}(t) 
&=
g^{(n)}(t) + w_n\big[v[a]|_{x=\ell}, v_x[a]|_{x=\ell}\big](0, t)
=
g^{(n)}(t) + w\big[\p_t^n \left(v[a]|_{x=\ell}\right), \p_t^n \left(v_x[a]|_{x=\ell}\right)\big](0, t)
\nn\\
&=
g^{(n)}(t) + w\big[v_n[a]|_{x=\ell},  \p_t^n \left(\left(\p_x v[a](x, t)\right) |_{x=\ell}\right)\big](0, t)
=
g^{(n)}(t) + w\big[v_n[a]|_{x=\ell},  \left(\p_x v_n[a](x, t) \right)|_{x=\ell}\big](0, t)
\nn\\
&=
g^{(n)}(t) + w\big[v[a^{(n)}]|_{x=\ell}, v_x[a^{(n)}]|_{x=\ell}\big](0, t)
=
\Phi_{g^{(n)}}[a^{(n)}](t).
\label{phinn}
}

Combining \eqref{phinn} with the definition \eqref{sob-int} of the Sobolev norm, we deduce
\eee{
\no{\Phi_g[a]}_{H^m(0, T)}
=
\sum_{n=0}^m \big\|\Phi_{g^{(n)}}[a^{(n)}]\big\|_{L^2(0, T)},
\quad
m \in \mathbb N_0.
}
Furthermore, by estimate \eqref{phiga-l2}, for each $n\in\mathbb N_0$ we have
\aaa{
\big\|\Phi_{g^{(n)}}[a^{(n)}]\big\|_{L^2(0, T)}
&\leq
\big\|g^{(n)}\big\|_{L^2(0, T)}
+
\frac{2^{\frac 52} \left(c_1 + c_2\right)}{\sqrt \pi} 
\left(  
1 
+
\frac{\sqrt 2 \left(\sqrt 3-1\right)}{\pi^{\frac 32} \sqrt 3} 
T
+
\frac{\sqrt 2 \left(\ell^2+6\right)}{\pi^{\frac 32} \sqrt 3 \, \ell^3}   T
\right) T \, \big\|a^{(n)}\big\|_{L^2(0, T)}
\label{phiga-l2-n}
}
with $c_1, c_2$ given by \eqref{cj-kdv-def}. Thus, we conclude that
\eee{\label{phiga-hm-int}
\no{\Phi_g[a]}_{H^m(0, T)}
\leq
\no{g}_{H^m(0, T)}
+
\frac{2^{\frac 52} \left(c_1 + c_2\right)}{\sqrt \pi} 
\left(  
1 
+
\frac{\sqrt 2 \left(\sqrt 3-1\right)}{\pi^{\frac 32} \sqrt 3} 
T
+
\frac{\sqrt 2 \left(\ell^2+6\right)}{\pi^{\frac 32} \sqrt 3 \, \ell^3}   T
\right) T \no{a}_{H^m(0, T)}
}
for any $m\in\mathbb N_0$. 
Moreover, the interpolation result of \eqref{inter-t} can be used to extend the validity of estimate~\eqref{phiga-hm-int} to any $m\geq 0$. 

Estimate \eqref{phiga-hm-int} is entirely analogous to the $L^2$-estimate \eqref{phiga-l2}. Furthermore, adjusting its derivation accordingly, we can easily obtain the analogue of the contraction inequality \eqref{phiga-l2-contr}. Thus, for any $m\geq 0$ and $T>0$ satisfying~\eqref{T-L2-kdv}, there is a unique $a \in B(0, \rho) \subset H^m(0, T)$ with $\rho = 2\no{g}_{H^m(0, T)}$ that satisfies the integral equation \eqref{a-eq-kdv} and admits the size estimate
\begin{equation}\label{a-hm-est-kdv}
\no{a}_{H^m(0, T)} 
\leq
\frac{1}{1-\frac{2^{\frac 52} \left(c_1 + c_2\right)}{\sqrt \pi} 
\left(  
1 
+
\frac{\sqrt 2 \left(\sqrt 3-1\right)}{\pi^{\frac 32} \sqrt 3} 
T
+
\frac{\sqrt 2 \left(\ell^2+6\right)}{\pi^{\frac 32} \sqrt 3 \, \ell^3}   T
\right) T}
\no{g}_{H^m(0, T)}, \quad m \geq 0.
\end{equation}

\section{Concluding remarks}
\label{c-s}

The techniques introduced in Sections \ref{red-s} and \ref{kdv-s} are general, in the sense that they can be applied to a wide range of linear evolution equations of dispersive or dissipative nature. By extension, the well-posedness of nonlinear counterparts of these equations on a finite interval can be inferred from the relevant well-posedness results on the half-line.
A notable exception is the nonlinear Schr\"odinger equation, whose well-posedness theory on a finite interval is known to require additional smoothness of the associated boundary data compared to the one required in the case of the half-line \cite{bsz2018}. 
In particular, neither the heat equation method of Section \ref{red-s} nor the linearized KdV method of Section \ref{kdv-s} are readily applicable to the linear Schr\"odinger equation, due to the fact that the complex contour of integration involved in the relevant unified transform solution formula (i.e. the analogue of the contours $\p \mathcal D$ and $\mathcal C^+$ in \eqref{heat-utm-t} and \eqref{v-kdv-sol}) is the boundary of the first quadrant of the complex plane, thus involving an \textit{infinite} portion along which the relevant exponential $e^{ikx-ik^2 t}$ is purely oscillatory (see, for example, formula (1.16) in \cite{fhm2017}).
The adaptation of the method introduced in the present work to the framework of Schr\"odinger-type equations is an interesting open problem that will be the subject of a future work.

\subsection{Uniqueness and global solvability}
\label{ug-ss}

The integral equation \eqref{a-int-eq} can have at most one solution on any interval $[0, T]$ with arbitrary $T>0$. 
The proof of this uniqueness result can be reduced to the case of the homogeneous problem with $g(t) \equiv 0$. It then suffices to show that $a(t) \equiv 0$. If $T<T_0$ with $T_0$ satisfying the equality in the contraction condition \eqref{T-L2}, then we use inequality \eqref{a-l2-est} to infer that $a(t) \equiv 0$ in $L^2(0, T)$. If $T>T_0$, then we first solve the homogeneous version of the integral equation \eqref{a-int-eq} on $[0, T_0]$ in order to infer that $a(t)\equiv 0$ on that interval. Subsequently, exploiting the fact that the contraction condition \eqref{T-L2} only depends on $\ell$, we solve the integral equation on the interval $[T_0/2, 3 T_0/2]$ in order to conclude that $a(t)\equiv 0$ also on that latter interval. This process can be repeated until the desired interval $[0, T]$ is covered. 

Having proved uniqueness, we next show that the integral equation \eqref{a-int-eq}, which was solved in Section \ref{red-s} up to times satisfying the contraction condition~\eqref{T-L2}, can actually be solved globally in the sense that its original solution can be extended to arbitrary time. Indeed, suppose that the solution to \eqref{a-int-eq} is originally obtained via contraction on the interval $[0, T_0]$ with $T_0$ satisfying the equality in~\eqref{T-L2}. 
Then, thanks to the fact that \eqref{T-L2} is \textit{independent} of the boundary datum $g$ (whose norm is used to define the radius of the ball for the contraction), we  carry out a contraction argument to obtain a solution to \eqref{a-int-eq} on the interval $\left[T_0/2, 3T_0/2\right]$, whose length is the same with that of the original interval $[0, T_0]$.  
Furthermore, by uniqueness (see  above), the solution on $[T_0/2, 3T_0/2]$ and the solution  on $[0, T_0]$  are equal on the overlapping interval $[T_0/2, T_0]$, hence eliminating any regularity concerns due to the ``gluing'' of the solutions. This process can be repeated until the desired time of existence is reached.

\subsection{Numerical illustrations}

We now provide a numerical illustration of the key components of the present work in the case of the heat equation. First, in Figures \ref{fig:3D-ab-s} and \ref{fig:3D-ab-s-1}, we observe the validity of the decomposition \eqref{fi-hl-dec-i} of the finite interval problem \eqref{q-fi-ibvp-i} into the two half-line problems \eqref{vw-hl-ibvp-i}. The main ingredient here is the numerical solution of the integral equation \eqref{a-int-eq}, which provides the half-line boundary datum $a(t)$ from the knowledge of the finite interval boundary datum $g(t)$. Then, the equation \eqref{b-sys} yields the boundary datum $b(t)$, which is the second piece of data involved in the decomposition \eqref{fi-hl-dec-i}. 

The above illustration is performed via the following simple numerical scheme. First, for sufficiently large $\tau>0$ and integers $M, N>0$, we write
\eee{\label{ab-num}
a(t)=
\left\{\begin{array}{ll}
\displaystyle \sum _{n=1}^M c_n \sin \left(\frac{ \pi  n t}{\tau}\right), & 0<t<\tau, 
\\[4mm]
 0, & t>\tau,
\end{array}
\right.
\qquad
  b(t)=
\left\{\begin{array}{ll}
\displaystyle \sum _{n=1}^N f_n \sin \left(\frac{ \pi  n t}{\tau}\right), & 0<t<\tau, \\[4mm]
 0, & t>\tau,
 \end{array}
\right.
}
thereby imposing finite support for the boundary conditions, as well as the compatibility condition $a(0)=b(0)=0$. By numerically evaluating the integral equation \eqref{a-int-eq} at $\left\{t_k \right\}_{k=1}^M$, we produce a linear algebraic system $M\times M$ for~$\left\{c_n \right\}_{n=1}^M$. Then, \eqref{b-sys} provides the numerical value of $b(t)$. Similarly, we determine $\left\{f_n \right\}_{n=1}^N$ via a $N\times N$  linear algebraic system. 

\begin{remark}
The choice of basis in \eqref{ab-num} is,  of course, restrictive in the reconstruction of $a(t)$. For example, the presence of a discontinuous boundary datum $g(t)$ would require a suitable basis for the representation of $a(t)$ incorporating that feature. A thorough numerical reconstruction of $a(t)$ via \eqref{a-int-eq} is outside the scope of the present work; rather, our aim is to merely provide a numerical  illustration of the associated theoretical analysis. 
\end{remark}

The evaluations displayed in Figure~\ref{fig:3D-ab-s} were performed for  small enough $T>0$ so that the contraction condition~\eqref{T-L2} is satisfied, namely for $T\approx 6 < 4^2 = \ell^2$, which yields a half-line datum $a(t)$ ``quite close'' to the finite interval datum $g(t)$, as expected by the contraction mapping. The solutions $v(x,t)$ and $w(x,t)$ of the half-line problems~\eqref{vw-hl-ibvp-i} are plotted using the unified transform formulae \eqref{heat-utm-t} and \eqref{w-utm-t}, respectively. The restriction of these two functions on $x\in(0,\ell)$, as well as their sum $v+w$, are illustrated in the top panel of Figure~\ref{fig:3D-ab-s}. The latter surface, shown in green, coincides (indistinguishable  in Figure~\ref{fig:3D-ab-s}) with the solution $q(x,t)$ to the finite interval problem \eqref{q-fi-ibvp-i},  which is computed via the unified transform formula (see (2.6), (2.11) in \cite{f2008})
\begin{equation}\label{q-sol}
    q(x,t)=\frac{1}{i\pi} \int_{k\in\partial \mathcal D} \frac{e^{-k^2 t}}{\sin(k\ell)} \sin[k(\ell-x)] \, k \, \widetilde{g}(k^2,t) \, dk
\end{equation}
with $\mathcal D$ and $\widetilde{g}(k^2,t)$ defined by \eqref{d-def} and \eqref{atil-def}, respectively. 
This result numerically verifies the decomposition~\eqref{fi-hl-dec-i}, as also shown in the bottom panels of Figure \ref{fig:3D-ab-s}, where an $\mathcal O\left(10^{-5}\right)$ discrepancy  between $q$ and $v+w$ is displayed.

\begin{figure}[ht!]
    \centering
    \includegraphics[scale=0.7]{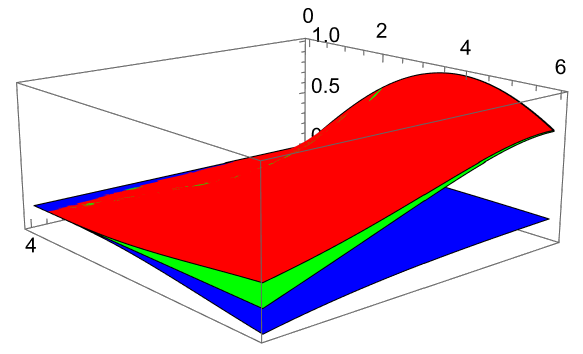} 
    \\
     \includegraphics[width=0.35\linewidth]{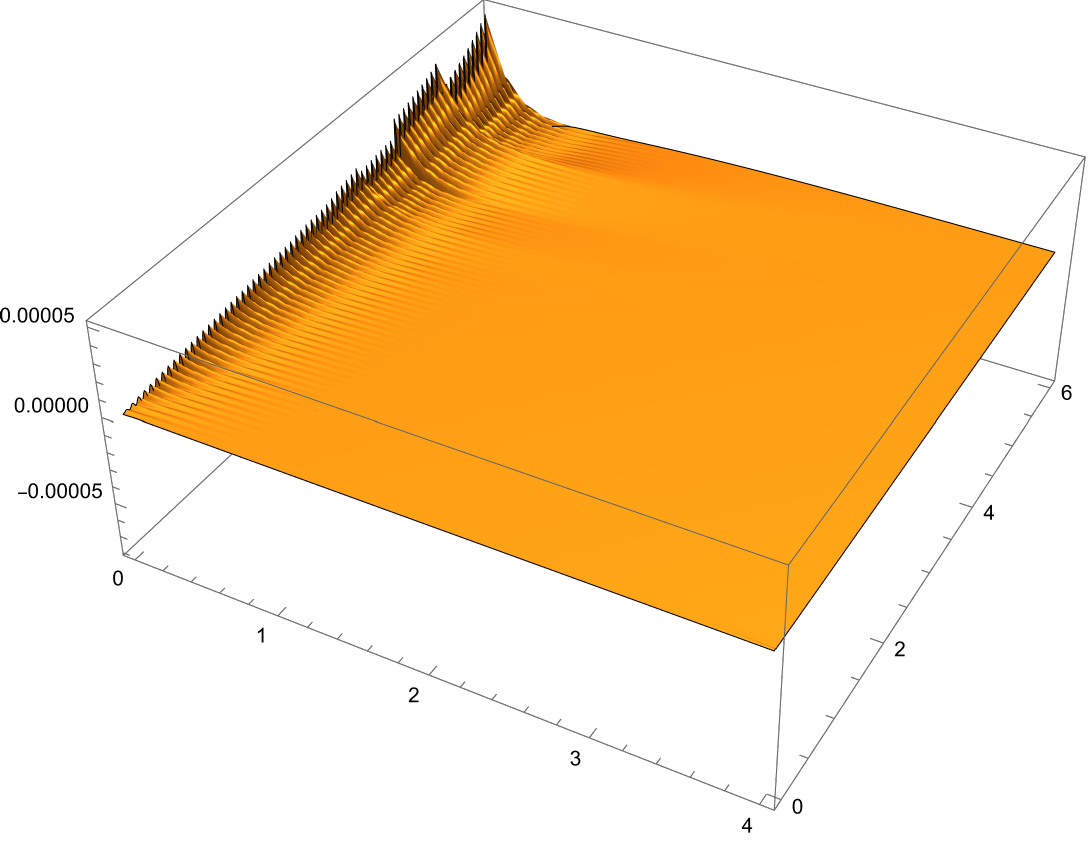}
    \hspace*{1cm}  \includegraphics[width=0.35\linewidth]{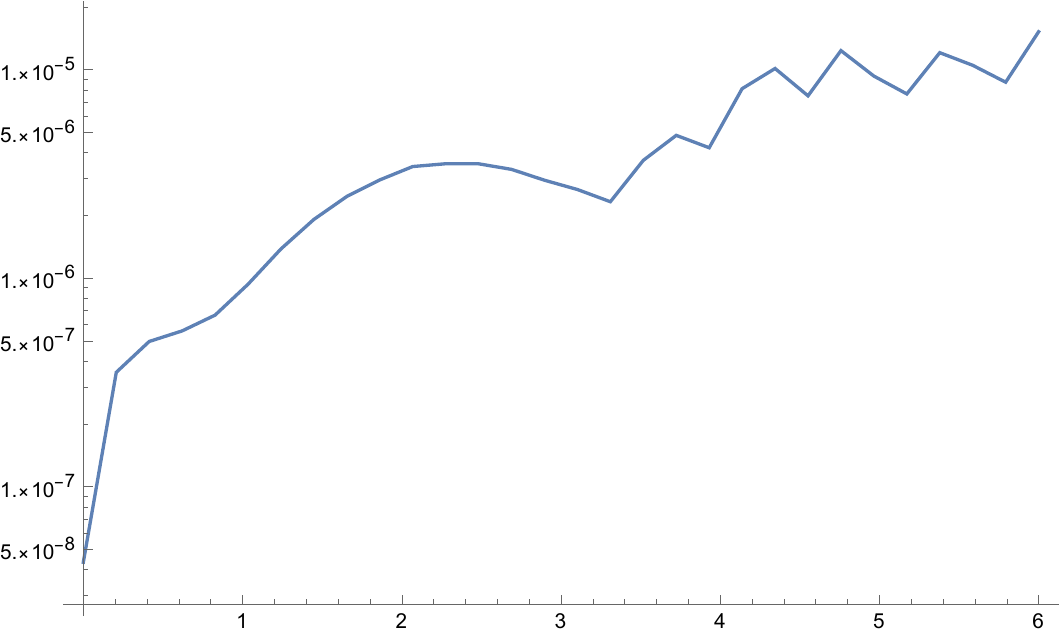}
    \caption{
    \textit{Top panel:} 
    Evaluation of the unified transform formulae~\eqref{heat-utm-t} and~\eqref{w-utm-t} for the solutions $v(x,t)$~(red) and $w(x,t)$ (blue) of the half-line problems~\eqref{vw-hl-ibvp-i} in the case of the boundary data~\eqref{ab-num}, which are obtained via the numerical solution of the integral equation~\eqref{a-int-eq} for $g(t) = \sin\left(2\pi t/\ell^2\right)$ supported for $t\in(0,T)$ with $T=3 \ell^2/8$. The surface colored in green corresponds to the sum $v(x,t)+w(x,t)$ and is virtually indistinguishable from the surface obtained by plotting the unified transform formula~\eqref{q-sol} for the solution $q(x,t)$ to the finite interval problem~\eqref{q-fi-ibvp-i} with boundary datum $g(t)$, thus verifying the decomposition \eqref{fi-hl-dec-i}. 
    \textit{Bottom panels:} The discrepancy $dc(x,t)=q(x,t)-\left[v(x,t)+w(x,t)\right]$ for $(x,t)\in (0,\ell)\times(0,T)$ with $\ell=4$ and $T = 6 < 4^2=\ell^2$, in line with the contraction condition~\eqref{T-L2} (bottom left), and the norm $\no{dc(t)}_{L_x^2(0,\ell)}$ for $t\in (0,T)$ in logarithmic scale (bottom right). Perfect agreement is observed between the numerical evaluation of the formulae for $q$ and $v+w$, consistent with the fact that the corresponding surfaces coincide in the top panel (green).
    }
    \label{fig:3D-ab-s}
\end{figure}

Furthermore, motivated by the global solvability of the integral equation \eqref{a-int-eq} established above, we compute $a(t)$ and $b(t)$ for values of $t$ which violate the restriction \eqref{T-L2}, namely $T\approx 42 > 16=\ell^2$,  where we observe bigger discrepancy between $a(t)$ and $g(t)$ --- in general, this also depends on the size of $a(t)$ itself. Similarly to the previous setup, in Figure \ref{fig:3D-ab-s-1} we plot the solutions $v(x,t)$, $w(x,t)$ to the two half-line problems \eqref{vw-hl-ibvp-i}, their sum $v(x,t)+ w(x,t)$, and the solution $q(x,t)$ to the finite interval problem \eqref{q-fi-ibvp-i}, with the surfaces corresponding to the last two quantities being virtually indistinguishable. The verification of the decomposition \eqref{fi-hl-dec-i} is illustrated by the bottom panels of Figure \ref{fig:3D-ab-s-1}, which display a discrepancy of  $\mathcal O(10^{-3})$.
\begin{figure}[ht!]
    \centering
    \includegraphics[scale=0.75]{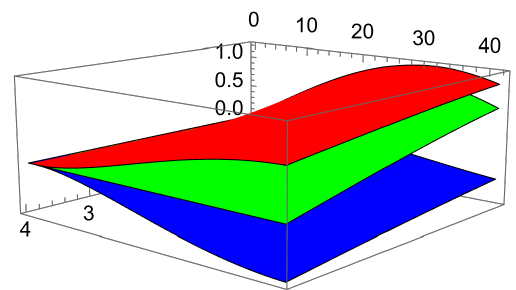} 
    \\
     \includegraphics[width=0.35\linewidth]{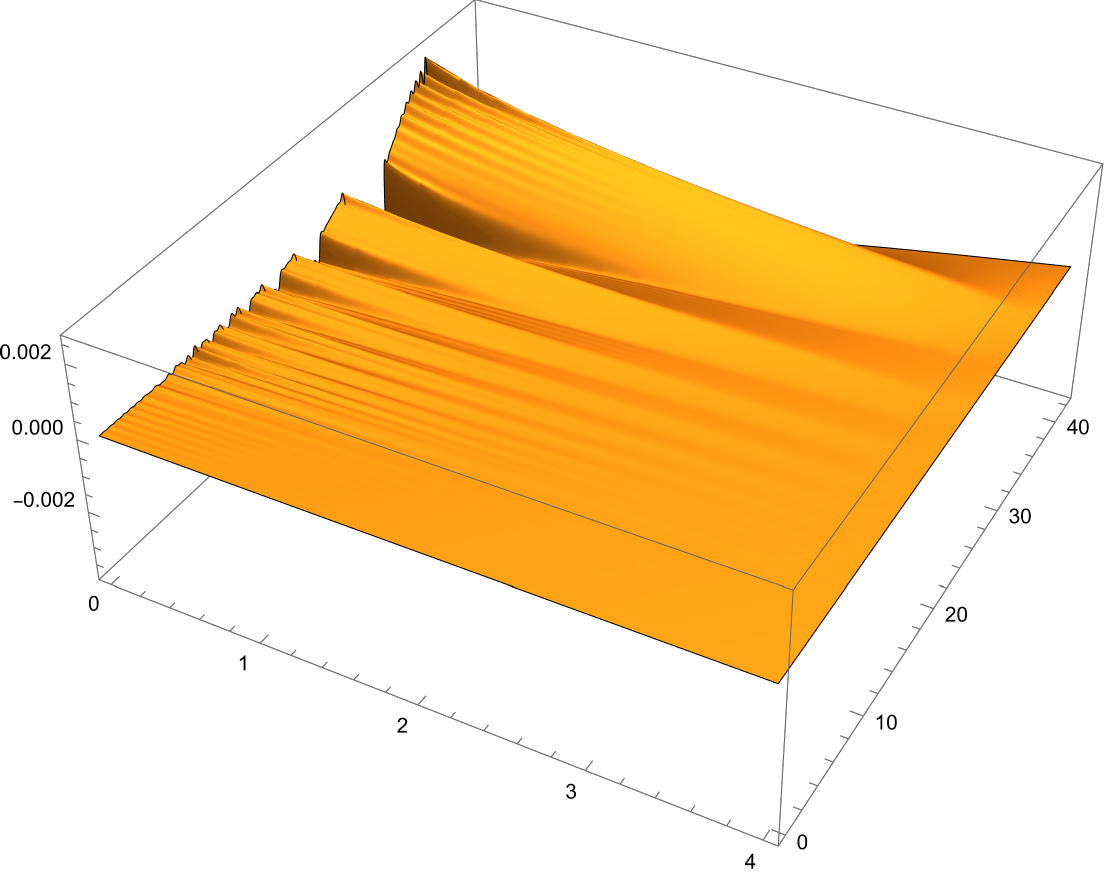}
    \hspace*{1cm}  \includegraphics[width=0.35\linewidth]{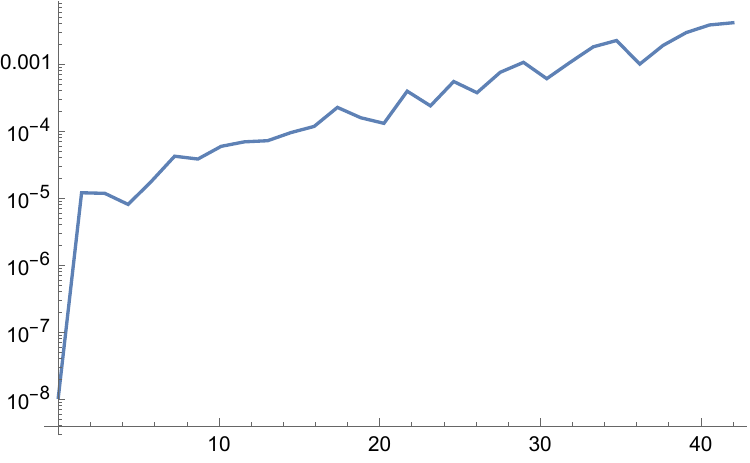}
    \caption{
   Same setup with the one of Figure \ref{fig:3D-ab-s} but for  $g(t) = \sin\left(2\pi t/(7\ell^2)\right)$, $t\in(0,T)$, with $T=21\ell^2/8$ so that $T = 42 > 4^2 = \ell^2$, thus violating the contraction condition~\eqref{T-L2}. Nevertheless, an excellent illustration of the decomposition \eqref{fi-hl-dec-i} is still observed, as the discrepancy $dc(x,t)=q(x,t)-v(x,t)-w(x,t)$ is of $\mathcal O(10^{-3})$.}
    \label{fig:3D-ab-s-1}
\end{figure}

\subsection{Evolution equations with time-dependent coefficients}
Let $d=d(t)$  be a smooth function on $[0,T]$ bounded below by a positive constant, namely $d(t) \geq c > 0$ for all $t\in [0, T]$. 
For the heat equation $u_t=d(t) u_{xx}$ with a time-variable diffusion coefficient $d(t)$,  based on the analysis of Section \ref{red-s} we discuss the reduction of the analysis of the finite interval problem to the one of the half-line problem.

In \cite{ko2025}, the following solution formulae were derived for the half-line problems  \eqref{vw-hl-ibvp-i} in the case of a variable diffusion coefficient:
\eee{\label{vsol-t}
\begin{aligned}
&v(x,t) = \frac{1}{i\pi} \int_{k\in \partial \mathcal D} e^{ikx - k^2 D(t)} k \left( \int_{z=0}^t d(z) e^{k^2 D(z)} a(z) dz \right) dk,
\\
&w(x,t) = \frac{1}{i\pi} \int_{k \in \partial \mathcal D} e^{ik (\ell-x) - k^2 D(t)} k \left( \int_{z=0}^t d(z) e^{k^2 D(z)} b(z) dz \right) dk,
\end{aligned}
\quad
D(t):= \int_0^t d(z) dz,
}
where $D(t)$ is an increasing function due to the positivity of $d(t)$ (this feature is essential to the derivation of formulae \eqref{vsol-t}). 

Via the approach of Section \ref{red-s}, we shall now derive the analogue of the integral equation \eqref{a-int-eq}. Consider the finite interval problem \eqref{q-fi-ibvp-i} for $q(x,t)$ but now in the case of a variable diffusion coefficient $d(t)$. Then, analogously to \eqref{ab-cond}, the decomposition \eqref{fi-hl-dec-i} requires that $b(t) = -v(\ell,t)$ where $v(x, t)$ satisfies the first of the half-line problems~\eqref{vw-hl-ibvp-i}, once again after adjusting the diffusion coefficient to $d(t)$. The solution to this half-line problem is given by the first of the formulae in \eqref{vsol-t}. Therefore, 
\eee{
b(t) =  -\frac{1}{i\pi} \int_{k\in\partial \mathcal D} e^{ik\ell - k^2 D(t)} k \left( \int_{z=0}^t d(z) e^{k^2 D(z)} a(z) dz \right) dk.
}
In turn, by the first of the conditions \eqref{ab-cond}, the second of the formulae \eqref{vsol-t} and the definition \eqref{dp-int} of $\Lambda_\ell(\sigma)$, we obtain the integral equation
\begin{equation}\label{a-int-eq-t}
    a(t) = g(t) + \frac{\ell^2}{4\pi} \int_{z=0}^t d(z) \, \Lambda_\ell\big(D(t)-D(z)\big)  \int_{r=0}^t d(r) \, \Lambda_\ell\big(D(z)-D(r)\big) \, a(r) \, dr \, dz,
\end{equation}
which provides the analogue of the integral equation \eqref{a-int-eq} that was solved in Section \ref{red-s}.

To this end, it is important to recall that for all $\sigma \in \mathbb R$ the integral $\Lambda_\ell(\sigma)$ admits the \textit{uniform} bound \eqref{L-bound}. Thus, we can prove the existence of solution to \eqref{a-int-eq-t} via a contraction mapping argument exactly as in the case of a unit diffusion coefficient. In particular, considering the map $a(t)\mapsto \Phi_g[a](t)$ with $\Phi_g[a]$ given by the right side of~\eqref{a-int-eq-t}, we may proceed as in Section \ref{cml2-ss} to derive the analogue of inequality \eqref{Phia-l2-est} in the form
\eee{
\no{\Phi_g[a]}_{L^2(0, T)}
\leq
\no{g}_{L^2(0, T)}
+
\frac{18 \sqrt 3 \, T^2}{\pi e^3 \ell^4}  \no{d}_{L^\infty(0, T)}^2 \no{a}_{L^2(0, T)}.
\label{Phia-l2-est-2-t}
}
Similarly, for any $a_1, a_2 \in L^2(0, T)$, we also have the following analogue to the contraction inequality \eqref{Phia-l2-contr}:
\eee{\label{Phia-l2-contr-t}
\no{\Phi_g[a_1] - \Phi_g[a_2]}_{L^2(0, T)}  
\leq
\frac{18 \sqrt 3 \, T^2}{\pi e^3 \ell^4} \no{d}_{L^\infty(0, T)}^2 \no{a_1-a_2}_{L^2(0, T)}.
}
The two inequalities \eqref{Phia-l2-est-2-t} and \eqref{Phia-l2-contr-t} can be used in the same way as their analogues in Section \ref{cml2-ss} to yield an $L^2(0, T)$ solution of the integral equation \eqref{a-int-eq-t} for a sufficiently small $T>0$ such that
\eee{
\frac{18 \sqrt 3 \, T^2}{\pi e^3 \ell^4} \no{d}_{L^\infty(0, T)}^2 \leq \frac 12.
}

\bibliographystyle{myamsalpha}
\bibliography{references_DM}

\end{document}